\newtheorem{theo}{Theorem}[section]
\newtheorem{lem}{Lemma}[section]
\newtheorem{remark}{Remark}[section]
\newtheorem{prop}{Proposition}[section]
\numberwithin{equation}{section}
\newcommand{\be}{\begin{equation}}
\newcommand{\ee}{\end{equation}}
\newcommand\bes{\begin{eqnarray}} \newcommand\ees{\end{eqnarray}}
\newcommand{\bess}{\begin{eqnarray*}}
\newcommand{\eess}{\end{eqnarray*}}
\newcommand{\bbbb}{\left\{\begin{aligned}}
\newcommand{\nnnn}{\end{aligned}\right.}
\newcommand{\bea}{\begin{align*}}
\newcommand{\eea}{\end{align*}}
\newcommand\ep{\varepsilon}
\newcommand\dd{\displaystyle}
\newcommand\dy{{\rm d}y}
\newcommand\yy{\infty}
\newcommand\ud{\underline}
\begin{document}\thispagestyle{empty}
\begin{center}
 {\LARGE\bf Dynamics for a diffusive epidemic model with a free boundary: sharp asymptotic profile\footnote{This work was supported by NSFC Grants
12171120, 11901541,12301247}}\\[4mm]
{\Large Xueping Li}\\[0.5mm]
{School of Mathematics and Information Science, Zhengzhou University of Light Industry, Zhengzhou, 450002, China}\\[2.5mm]
  {\Large Lei Li}\\[0.5mm]
{College of Science, Henan University of Technology, Zhengzhou, 450001, China}\\[2.5mm]
{\Large Mingxin Wang\footnote{Corresponding author. {\sl E-mail}: mxwang@hpu.edu.cn}}\\[0.5mm]
 {School of Mathematics and Information Science, Henan Polytechnic University, Jiaozuo, 454003, China}
\end{center}

\date{\today}

\begin{quote}
\noindent{\bf Abstract.}This paper concerns the sharp asymptotic profiles of the solution of a diffusive  epidemic model with one free boundary and one fixed boundary which is subject to the homogeneous Dirichlet boundary condition and Neumann boundary condition, respectively. The longtime behaviors has been proved to be governed by a spreading-vanishing dichotomy in \cite{LL}, and when spreading happens, the spreading speed is determined in \cite{LLW}. In this paper, by constructing some subtle upper and lower solutions, as well as employing some detailed analysis, we improve the results in \cite{LLW} and obtain the sharp asymptotic spreading profiles, which show the homogeneous Dirichlet boundary condition and Neumann boundary condition imposed at the fixed boundary $x=0$ lead to the same asymptotic behaviors of $h(t)$ and $(u,v)$ near the spreading front $h(t)$.

\textbf{Keywords}: Epidemic model; free boundary; reaction-diffusion system; spreading profile

\textbf{AMS Subject Classification (2000)}: 35K57, 35R09,
35R20, 35R35, 92D25
\end{quote}

\pagestyle{myheadings}
\section{Introduction}
\renewcommand{\thethm}{\Alph{thm}}
{\setlength\arraycolsep{2pt}

Using reaction-diffusion equation (system) to model the spread of epidemics is a hot topic in biomathematics. Relevant studies not only reveal some interesting propagation phenomena, but also promote the development of corresponding mathematical theories. To study the spread of oral-faecal transmitted epidemics, Hsu and Yang \cite{HY} proposed a reaction-diffusion system
 \bes\label{1.1}
\left\{\!\begin{aligned}
&u_{t}=d_1 u_{xx}-au+H(v), & &t>0,~x\in\mathbb{R},\\
&v_{t}=d_2 v_{xx}-bv+G(u), & &t>0,~x\in\mathbb{R},
\end{aligned}\right.
 \ees
 where $H(v)$ and $G(u)$ satisfy
\begin{enumerate}
\item[{\bf(H)}]\; $H,G\in C^2([0,\yy))$, $H(0)=G(0)=0$, $H'(z),G'(z)>0$ in $[0,\yy)$, $H''(z), G''(z)<0$ in $(0,\yy)$, and $G(H(\hat z)/a)<b\hat{z}$ for some $\hat{z}>0$.
 \end{enumerate}

In this model, $u(t,x)$ and $v(t,x)$ stand for the spatial
concentrations of bacteria and infective human population, respectively, at time $t$ and location $x$ in the one dimensional habitat; $-au$ and $H(v)$ represent the natural death rate of bacterial population and the contribution of infective human to the growth rate of the bacteria, respectively; $-bv$ and $G(u)$ are the fatality rate of infective human population and the infection rates of human population, respectively; $d_1$ and $d_2$, respectively, stand for the diffusion rate of bacteria and infective human. Hsu and Yang in \cite{HY} showed that when
$ \mathcal{R}_0:=\frac{H'(0)G'(0)}{ab}>1$,
there exists a $c_*>0$, which is determined by the characteristic polynomial of the linearized system of \eqref{1.1} at $(0,0)$, such that if and only if $c\ge c_*$, \eqref{1.1} has a monotone travelling wave solution $(\phi_1,\phi_2)$ which is unique up to translation and satisfies
 \bess
\left\{\!\begin{aligned}
&d_1\phi''_1-c\phi'_1-a\phi_1+H(\phi_2)=0, \;\;x\in\mathbb{R},\\
&d_2\phi''_2-c\phi'_2-b\phi_2+G(\phi_1)=0, \;\;\,x\in\mathbb{R},\\
&\phi_1(-\yy)=\phi_2(-\yy)=0, ~ \phi_1(\yy)=u^*, ~ \phi_2(\yy)=v^*,
\end{aligned}\right.
 \eess
 where $(u^*,v^*)$ is the unique positive root of $au=H(v)$ and $bv=G(u)$. Moreover, the dynamics of the corresponding ODE system with positive initial value is governed by $\mathcal{R}_0$. Namely, when $\mathcal{R}_0<1$, $(0,0)$ is globally asymptotically stable; while when $\mathcal{R}_0>1$, then the unique positive equilibrium $(u^*,v^*)$ is globally asymptotically stable.

 If $H(v)=cv$, then system \eqref{1.1} reduces to
 \bess
\left\{\!\begin{aligned}
&u_{t}=d_1 u_{xx}-au+cv, & &t>0,~x\in\mathbb{R},\\
&v_{t}=d_2 v_{xx}-bv+G(u), & &t>0,~x\in\mathbb{R},
\end{aligned}\right.
 \eess
where $G$ satisfies that $G\in C^2([0,\yy))$, $G(0)=0<G'(u)$ in $[0,\yy)$, $G(u)/u$ is strictly decreasing in $(0,\yy)$ and $\lim_{u\to\yy}{G(u)}/{u}<{ab}/{c}$. The corresponding ODE system was first proposed in \cite{CP} to describe the 1973 cholera epidemic spread in the European Mediterranean regions.

When we model the spread of epidemics mathematically, one of important issues is to know where the spreading front of an epidemic is located, which naturally motivates us to discuss the related systems on the domain whose boundary is unknown and varies over time, instead of the fixed boundary domain or the whole space. Inspired by the pioneering work \cite{DL}, where the Stefan boundary condition is successfully introduced to a reaction-diffusion equation arising from biology and many new as well as interesting propagation phenomena were found, Li et al \cite{LL} recently incorporated the Setan boundary condition into \eqref{1.1} and proposed the following model
  \bes\begin{cases}\label{1.2}
u_t=d_1u_{xx}-au+H(v), &t>0,~x\in(0,h(t)),\\
v_t=d_2v_{xx}-bv+G(u), &t>0,~x\in(0,h(t)),\\
\mathbb{B}[u](t,0)=\mathbb{B}[v](t,0)=u(t,h(t))=v(t,h(t))=0, \; &t>0,\\
h'(t)=-\mu_1 u_x(t,h(t))-\mu_2v_x(t,h(t)), & t>0,\\
h(0)=h_0, ~ u(0,x)=u_{0}(x), ~ v(0,x)=v_0(x),&0\le x\le h_0,
 \end{cases}
 \ees
 where the initial functions $u_0$ and $v_0$ satisfy
  \begin{enumerate}
\item[{\bf(I)}]\; $w\in C^{2}([0,h_0])$, $w'(0)>0$, $w(h_0)=w(0)=0<w(x)$ in $(0,h_0)$ when $\mathbb B[w]=w$, $w(h_0)=w'(0)=0<w(x)$ in $[0,h_0)$ when $\mathbb{B}[w]=w_x$.
 \end{enumerate}
 The operator $\mathbb{B}[w]=w$ or $w_x$, which indicates that the homogeneous Dirichlet or Neumann boundary condition are imposed at the fixed boundary $x=0$, respectively.
They showed that longtime behaviors are governed by a spreading-vanishing dichotomy. That is, one of the following alternatives must happen:

{\it $\underline {Spreading}$:} necessarily $\mathcal{R}_0>1$, $\lim_{t\to\yy}h(t)=\yy$,
\bess\left\{\!\begin{array}{ll}
\dd\lim_{t\to\yy}(u(t,x),v(t,x))=(U(x),V(x)) ~ {\rm in ~ }C_{\rm loc}([0,\yy)) {\rm ~ when ~ operator ~ }\mathbb{B}[w]=w,\\
\dd\lim_{t\to\yy}(u(t,x),v(t,x))=(u^*,v^*) ~ {\rm in ~ }C_{\rm loc}([0,\yy)) {\rm ~ when ~ operator ~ }\mathbb{B}[w]=w_x,
 \end{array}\right.
\eess
where $(U(x),V(x))$ is the unique bounded positive solution of
 \bess\left\{\!\begin{array}{ll}
-d_1u''=-au+H(v), \;\;&x\in(0,\yy),\\[1mm]
-d_2v''=-bv+G(u), &x\in(0,\yy),\\[1mm]
u(0)=v(0)=0.
 \end{array}\right.
\eess

{\it $\underline {Vanishing}$:} $\lim_{t\to\yy}h(t)<\yy$, $\lim_{t\to\yy}\|u(t,x)+v(t,x)\|_{C([0,h(t)])}=0$.

Moreover, some sharp criteria for spreading and vanishing were also obtained. For example, spreading will happen if $h_0\ge l_0$, where $l_0$ is given by
\bes\label{1.3}\left\{\!\begin{array}{ll}
l_0=\dd\pi\sqrt{\frac{ad_2+bd_1+\sqrt{(ad_2-bd_1)^2
 +4d_1d_2H'(0)G'(0)}}{2(H'(0)G'(0)-ab)}}\;\;{\rm ~ when ~ operator ~ }\mathbb{B}[w]=w,\\[4mm]
 l_0=\dd\frac{\pi}{2}\sqrt{\frac{ad_2+bd_1+\sqrt{(ad_2-bd_1)^2
 +4d_1d_2H'(0)G'(0)}}{2(H'(0)G'(0)-ab)}}\;\; {\rm ~ when ~ operator ~  }\mathbb{B}[w]=w_x.
 \end{array}\right.
\ees

Later, when spreading happens, we in \cite{LLW} derived the spreading speed of \eqref{1.2} by analyzing the following semi-wave problem
 \bes\label{1.4}
\left\{\!\begin{aligned}
&d_1\varphi''-c\varphi'-a\varphi+H(\psi)=0, \;\;x>0,\\
&d_2\psi''-c\psi'-b\psi+G(\varphi)=0, \;\;\,x>0,\\
&\varphi(0)=\psi(0)=0, ~ \varphi(\yy)=u^*, ~ \psi(\yy)=v^*.
\end{aligned}\right.
 \ees
More precisely, we first proved that for any $c\in[0,c_*)$, \eqref{1.4} has a unique solution $(\varphi,\psi)$ with $\varphi'(x)>0$ and $\psi'(x)>0$ for all $x\ge0$; moreover, for any positive constants $\mu_1$ and $\mu_2$, there exists  a unique $c_0\in(0,c_*)$ such that
\[\mu_1\varphi'_{c_0}(0)+\mu_2\psi'_{c_0}(0)=c_0,\]
where $(\varphi_{c_0},\psi_{c_0})$ is the unique monotone solution of \eqref{1.4} with $c=c_0$.
Then with the aid of a complete understanding for \eqref{1.4}, it was shown that if spreading occurs, the unique solution $(u,v,h)$ of \eqref{1.2} satisfies
\bes\label{1.5}\begin{cases}
\dd\lim_{t\to\yy}\frac{h(t)}{t}=c_{0},\\
\dd\lim_{t\to \yy}\max_{x\in[0,ct]}\big(|u(t,x)-U(x)|+|v(t,x)-V(x)|\big)=0, ~ \forall\, c\in[0,c_0) ~ {\rm when ~} \mathbb{B}[w]=w,\\
\dd\lim_{t\to \yy}\max_{x\in[0,ct]}\big(|u(t,x)-u^*|+|v(t,x)-v^*|\big)=0, ~ \forall\, c\in[0,c_0) ~ {\rm when ~} \mathbb{B}[w]=w_x.
 \end{cases}
\ees

In this paper, motivated by the results of \cite{DMZ,KMY1,WND1}, we aim at obtaining the sharp asymptotic profile of the solution $(u,v,h)$ when spreading happens. The main result is the following theorem.

\begin{theo}\label{t1.1}Let spreading happen and $(u,v,h)$ be the unique solution of \eqref{1.2}.  Then the following statements are valid.
\begin{enumerate}[$(1)$]
\item If $\mathbb{B}[w]=w$ and $(d_1,a,H'(v^*))=(d_2,b,G'(u^*))$, then there exists some $h_*\in\mathbb{R}$ such that
\bess
\begin{cases}h(t)=c_0t+h_*+o(1),\\
\dd\lim_{t\to\yy}\max_{x\in[ct,h(t)]}\big(|u(t,x)-\varphi_{c_0}(h(t)-x)|+|v(t,x)-\psi_{c_0}(h(t)-x)|\big)=0, ~ ~ \forall c\in(0,c_0).
\end{cases}
\eess
\item If $\mathbb{B}[w]=w_x$, then there exists some $\tilde h_*\in\mathbb{R}$ such that
\bess
\begin{cases}h(t)=c_0t+\tilde h_*+o(1),\\
\dd\lim_{t\to\yy}\max_{x\in[0,h(t)]}\big(|u(t,x)-\varphi_{c_0}(h(t)-x)|+|v(t,x)-\psi_{c_0}(h(t)-x)|\big)=0.
\end{cases}
\eess
\end{enumerate}
\end{theo}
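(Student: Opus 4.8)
\medskip\noindent\emph{Overall strategy and preliminaries.} I would work in the moving coordinate $z=h(t)-x$: setting $\tilde u(t,z)=u(t,h(t)-z)$, $\tilde v(t,z)=v(t,h(t)-z)$, the pair $(\tilde u,\tilde v)$ solves a reaction--diffusion system on $(0,h(t))$ which, when $h'(t)\equiv c_0$, has the semi-wave $(\varphi_{c_0},\psi_{c_0})$ as a stationary solution; so the theorem reduces to showing that $(\tilde u,\tilde v)$ converges uniformly to $(\varphi_{c_0},\psi_{c_0})$ and that $h'(t)-c_0$ is integrable at infinity. Two facts will be used throughout. (a) Linearising \eqref{1.4} at $(u^*,v^*)$ and using that $(u^*,v^*)$ is asymptotically stable for the kinetic system (so the associated characteristic equation has a root $\lm>0$), one obtains the exponential estimates $0<u^*-\varphi_{c_0}(z)\le Ce^{-\lm z}$, $0<v^*-\psi_{c_0}(z)\le Ce^{-\lm z}$ and matching bounds for the first derivatives; the hypothesis $(d_1,a,H'(v^*))=(d_2,b,G'(u^*))$ in part (1) makes this linearisation symmetric, so both components decay at the \emph{same} rate along the direction $(1,1)$. (b) By \cite{LLW} we already have $h(t)/t\to c_0$ and local uniform convergence of $(u,v)$ to $(U,V)$ (resp.\ $(u^*,v^*)$); hence for every $c_1\in(0,c_0)$ and $\ep>0$, $(u,v)$ lies within $\ep$ of $(u^*,v^*)$ on $[0,c_1t]$ once $t$ is large.

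\medskip\noindent\emph{Upper and lower solutions.} The core of the argument is to construct barriers of the form
\[
\bar h(t)=c_0t+K(1-e^{-\sa t})+\xi_0,\qquad \bar u(t,x)=\varphi_{c_0}\big(\bar h(t)-x\big)+\de e^{-\sa t},\qquad \bar v(t,x)=\psi_{c_0}\big(\bar h(t)-x\big)+\de e^{-\sa t},
\]
with $\sa>0$ small (less than the decay rate $\lm$), $\de>0$ small, $K>0$ large (in part (2), where no symmetry is assumed, one takes correction terms $\de_1e^{-\sa t},\de_2e^{-\sa t}$ proportional to the leading stable eigenvector at $(u^*,v^*)$). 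Substituting into \eqref{1.2}, the principal terms cancel since $(\varphi_{c_0},\psi_{c_0})$ solves \eqref{1.4} with $c=c_0$; the residual is $\xi'(t)\varphi_{c_0}'=O(\sa e^{-\sa t})$ plus the concavity errors of $H$ and $G$, all absorbed by the dissipation $-\sa\de e^{-\sa t}$ produced by the constant corrections — and it is precisely the symmetry condition that lets the \emph{same} correction serve both equations in part (1). For the Stefan condition one checks $\bar h'(t)=c_0+K\sa e^{-\sa t}\ge \mu_1\varphi_{c_0}'(0)+\mu_2\psi_{c_0}'(0)+O(\de e^{-\sa t})=c_0+O(\de e^{-\sa t})$, valid when $K\sa\gg\de$; the condition at $x=0$ is immediate (Neumann: the mismatch $\varphi_{c_0}'(\bar h(t))$ is exponentially small; Dirichlet: $\bar u,\bar v>0$). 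The lower solution is built symmetrically, $\underline h(t)=c_0t-L(1-e^{-\sa t})-\eta_0$ and $\underline u=(1-\de e^{-\sa t})\varphi_{c_0}(\underline h(t)-x)_+$, etc.; here the Dirichlet case is subtler, because near $x=0$ the true solution is $\approx U(x)\to0$ while $\varphi_{c_0}(\underline h(t)-x)\approx u^*$, so one either splices in an appropriate translate of $(U,V)$ near $x=0$ or runs the comparison only on the region $x\ge c_1t$ (using fact (b) at the left end) — this is exactly why part (1) asserts convergence only on $[ct,h(t)]$. The comparison principle then gives $\underline u\le u\le\bar u$, $\underline h\le h\le\bar h$, and letting $\de\to0$, $t\to\yy$ yields both the claimed uniform convergence to the shifted semi-wave and the bound $h(t)-c_0t=O(1)$.

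\medskip\noindent\emph{From $O(1)$ to convergence; the main obstacle.} It remains to promote $h(t)-c_0t=O(1)$ to $h(t)-c_0t\to h_*$ (resp.\ $\tilde h_*$), which I regard as the delicate point. Writing $\ol h_*=\limsup_{t\to\yy}(h(t)-c_0t)$ and $\ud h_*=\liminf_{t\to\yy}(h(t)-c_0t)$, I would argue by rigidity: along a sequence $t_n\to\yy$ realising $\ol h_*$, parabolic estimates give subsequential convergence of $\big((u,v)(\cdot+t_n,\cdot),h(\cdot+t_n)\big)$ to an entire solution whose free boundary is the line $c_0t+\ol h_*$ and whose profile is the semi-wave translated by $\ol h_*$; inserting a translate of the semi-wave with shift slightly exceeding $\ol h_*$ as an upper solution started at a large time then forces $h(t)-c_0t\le\ol h_*-\ep$ for large $t$ unless $\ol h_*=\ud h_*$, the uniqueness and strict monotonicity of the semi-wave from \cite{LLW} supplying the required rigidity. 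I expect this step — together with running every comparison on the genuinely time-dependent domain $(0,h(t))$ while simultaneously tracking the boundary at $x=0$ and the front at $x=h(t)$ — rather than the (routine but lengthy) verification of the barrier differential inequalities, to be the main technical obstacle.
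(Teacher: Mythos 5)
Your overall architecture---(i) prove $|h(t)-c_0t|\le C$ via exponentially relaxing barriers built from translates of the semi-wave, and (ii) promote boundedness to convergence by a compactness/rigidity argument and a comparison that excludes $\limsup>\liminf$---matches the paper, as does the observation that the Dirichlet case must run the comparison on $[ct,h(t)]$ with a left boundary condition supplied by the behaviour of $(u,v)$ in the interior. But there is a genuine gap exactly where you are briefest, and you have misplaced the role of the hypothesis $(d_1,a,H'(v^*))=(d_2,b,G'(u^*))$. You attribute it to the exponential decay of the semi-wave ("makes the linearisation symmetric, so both components decay at the same rate"), but the estimate $u^*-\varphi_{c_0}(z)+v^*-\psi_{c_0}(z)+\varphi_{c_0}'(z)+\psi_{c_0}'(z)=O(e^{-\alpha z})$ holds unconditionally and is needed for both parts (paper's Lemma 2.1). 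The hypothesis is actually needed for a different and harder fact: an \emph{exponential-in-time} lower bound $u(t,x)\ge u^*-Me^{-\delta t}$, $v(t,x)\ge v^*-Me^{-\delta t}$ uniformly on the moving strip $x\in[c_1t,c_2t]$, $0<c_1<c_2<c_0$ (paper's Lemma 2.2). This is precisely what is required to verify the left boundary inequality $\underline u(t,ct)\le u(t,ct)$ for your lower solution $\underline u=(1-\delta e^{-\sigma t})\varphi_{c_0}(\underline h(t)-x)$, since $\underline u(t,ct)=u^*-O(e^{-\sigma t})$: a solution that is merely eventually within $\varepsilon$ of $u^*$ (your ``fact (b)'') cannot dominate $u^*-\delta u^*e^{-\sigma t}$ for all large $t$ unless it converges at an exponential rate with exponent at least $\sigma$. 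Establishing that rate is a nontrivial intermediate result (the paper runs a heat-kernel estimate on a sub-strip after adding the two equations, and the assumptions $a=b$, $d_1=d_2$, $H'(v^*)=G'(u^*)$ are what make the sum $u+v$ obey a closed scalar differential inequality). Without recognising and proving this, the lower-solution step of part (1) does not close.

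The remaining steps are compatible in spirit with the paper but under-specified. For the rigidity step, the paper extracts a sequence realising the \emph{liminf} of $h(t)-c_0t$ so that the limiting free boundary satisfies $L(t)\ge L(0)$ for all $t$; that one-sided inequality is what drives the sliding argument (comparing against $\varphi_{c_0}(C-\cdot),\psi_{c_0}(C-\cdot)$ and pushing $C$ up to $L(0)$, using $ab>H'(v^*)G'(u^*)$, the strong maximum principle and the Hopf lemma). Taking a $\limsup$ sequence, as you suggest, reverses the inequality and the sliding would have to be redone from above; it is not automatic, and ``uniqueness and strict monotonicity of the semi-wave'' alone does not yield it. Finally, your additive perturbation $\varphi_{c_0}+\delta e^{-\sigma t}$ can be made to work in part (1) (where $a>H'(v^*)$ indeed follows from the symmetry plus $ab>H'(v^*)G'(u^*)$) and in part (2) with the eigenvector-proportional shift you mention; the paper instead uses the multiplicative ansatz $(1\pm Ke^{-\delta t})(\varphi_{c_0},\psi_{c_0})$, which exploits the concavity of $H,G$ via $(H(z)/z)'<0$ and avoids choosing an eigenvector---slightly cleaner but not essentially different.
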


Such sharp convergence results for reaction-diffusion equation (system) with or without free boundary have been attracting much attention of lots of scholars over the past decades. For example, please see \cite{HNRR} for the Fisher-KPP equation, \cite{PWZ,WXZ,AX} for the Lotka-Volterra competition system, \cite{KMY,LiuLou} for the bistable equation with a free boundary (especially, the so-called logarithmic shifting appears in \cite{KMY}), and \cite{WQW} for the Lotka-Volterra competition system with free boundary under strong-weak competition case, \cite{WND1} for the West Nile virus model with double free boundaries. This paper seems to be the first attempt to consider the sharp asymptotic spreading profile of a epidemic model with one free boundary and one fixed boundary that is subject to the homogeneous Dirichlet boundary condition and Neumann boundary condition, respectively.

Before starting our proof of Theorem \ref{t1.1}, we would like to mention that incorporating free boundary condition into the reaction-diffusion equation (system) has been increasingly recognized and much progress has been made over the past decades. Particularly, Cao et al \cite{CDLL} assumed that the species in the model of \cite{DL} adopt a nonlocal diffusion strategy, and thus proposed a new free boundary problem. An analogous problem was meanwhile proposed by Cor{\'t}azar et al \cite{CQW}. For the free boundary problem with nonlocal diffusion, it was proved in \cite{DLZ,DN1,DN2} that accelerated spreading may happen, i.e., the spreading speed may be infinite, depending on whether a threshold condition is satisfied by kernel function, which is much different from those of the free boundary problems with local diffusion since they always have a finite spreading speed (see \cite{DL,DLou,DWZ17,WND}). For more details, one can refer to \cite{DuG,GW15,LWZjde22,DFS} and the references therein, or the expository article \cite{Du22}.

\begin{remark}\label{r1.1}We mention that when spreading happens and $\mathbb{B}[w]=w$, in contrast to the models with double free boundaries in \cite{WND1}, where the solution will converge to a unique constant equilibrium, the solution component $(u,v)$ of \eqref{1.2} will converge to a unique non-constant equilibrium $(U(x),V(x))$. This difference brings a considerable challenge for the constructing of a desired lower solution. To overcome this difficulty, we will prove a crucial estimate for solution component $(u,v)$, which is non-trivial and needs a extra technical assumption $(d_1,a,H'(v^*))=(d_2,b,G'(u^*))$.
\end{remark}

\begin{remark}\label{r1.2} Form Theorem \ref{t1.1}, it follows that whether $\mathbb{B}[w]=w$ or $\mathbb{B}[w]=w_x$, the asymptotic behaviors of $h(t)$ and $(u,v)$ near the spreading front $h(t)$ are the same.
\end{remark}

This paper is arranged as follows. Section 2 involves some preliminary work, including some properties of solution $(u,v,h)$ of \eqref{1.2}, an estimate for the convergence rate for the solution $(\varphi_{c_0},\psi_{c_0})$ of semi wave problem \eqref{1.4} and a crucial lower bound for solution component $(u,v)$ when $\mathbb{B}[w]=w$. Section 3 is devoted to the proof of boundedness of $h(t)-c_0t$ by utilizing the estimates in Section 2 to constructing some suitable upper and lower solutions. In Section 4, we complete the proof of Theorem \ref{t1.1} by following similar lines as in \cite{WND1}. Throughout this paper, we always assume that spreading happens for \eqref{1.2}, which implies $\mathcal{R}_0>1$.

\section{Preliminaries}{\setlength\arraycolsep{2pt}
In this section, we give some useful lemmas which will pave the road for later discussion. Note that comparison arguments will often be used throughout this paper. However, we do not give the related comparison principles since they are well-known, and the readers can refer to \cite[Lemmas 2.1 and 2.2]{WND1} for some examples. There are some notations that need to be introduced. For any vectors ${\bf u}=(u_1,u_2,\cdots, u_m)$ and ${\bf v}=(v_1,v_2,\cdots,v_m)$, ${\bf u}\ge{\bf v}$ (${\bf u}>{\bf v}$) means $u_i\ge v_i$ ($u_i>v_i$) for $i=1,2,\cdots, m$; $Q^T$ represents the transpose of matrix $Q$.

The first lemma concerns the properties of the unique solution $(u,v,h)$ of \eqref{1.2}. Since the proof is standard, we omit the details and the interested readers can refer to \cite[Lemma 2.2]{DL} for the boundedness of $(u,v)$ and $h'$, and \cite[Theorem 8.6, pp 198]{Wpara} for the regularity of $(u,v,h)$.

\begin{lem}\label{l2.0}Let $(u,v,h)$ be the unique solution of \eqref{1.2}. Then there exists a $C>0$ depending only on the parameters of \eqref{1.2}, nonlinear term $(H,G)$ and the initial functions $u_0$ and $v_0$, such that
\bess
0<u(t,x),v(t,x)\le C ~ ~ {\rm and ~ ~ }0<h'(t)<C
\eess
for $t>0$ and $x\in(0,h(t))$. Moreover, for any $\alpha\in(0,1)$, and $\tau,T>0$, we have
\bess
h\in C^{1+\frac{1+\alpha}{2}}([\tau,T]) {\rm ~ ~ and ~ ~}(u,v)\in[C^{1+\frac{\alpha}{2},~ 2+\alpha}([\tau,T]\times[0,h(t)])]^2.
\eess
\end{lem}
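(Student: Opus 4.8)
The plan is to establish the four assertions in turn by comparison arguments and then by parabolic regularity theory, watching at each step how the constants depend on the data; throughout, recall that by $(\mathbf{H})$ the functions $H,G$ are nonnegative and increasing on $[0,\infty)$, so \eqref{1.2} is a cooperative system. \emph{Positivity and the $L^\infty$ bound.} Since $(0,0)$ satisfies the equations and the boundary conditions with equalities, comparison for cooperative systems gives $u,v\ge 0$, and since $u_0,v_0>0$ in $(0,h_0)$ by $(\mathbf{I})$, the strong maximum principle upgrades this to $u,v>0$ in $(0,h(t))$ for $t>0$. For the upper bound, take $\hat z>0$ as in $(\mathbf{H})$ and set $\bar v=\hat z$, $\bar u=H(\hat z)/a$, so $H(\bar v)=a\bar u$ and $G(\bar u)<b\bar v$. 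Concavity of $H,G$ with $H(0)=G(0)=0$ gives $H(\lambda\bar v)\le\lambda H(\bar v)$ and $G(\lambda\bar u)\le\lambda G(\bar u)$ for every $\lambda\ge 1$, so the constant pair $(\lambda\bar u,\lambda\bar v)$ is a global supersolution of the $(u,v)$-system; choosing $\lambda$ with $\lambda\bar u\ge\|u_0\|_\infty$, $\lambda\bar v\ge\|v_0\|_\infty$ and comparing on $\{0<x<h(t)\}$ (the ordering holds at $x=0$ for either choice of $\mathbb B$, and at $x=h(t)$ because $u=v=0$) yields $0<u\le C_1:=\lambda\bar u$ and $0<v\le C_2:=\lambda\bar v$, with $C_1,C_2$ depending only on $(H,G)$, $a$, $b$, $\|u_0\|_\infty$, $\|v_0\|_\infty$.

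\emph{Bounds on $h'$.} Hopf's lemma at $x=h(t)$ gives $u_x(t,h(t)),v_x(t,h(t))<0$, hence $h'(t)>0$; in particular $h$ is increasing, so $h(t)\ge h_0$ for all $t$. For the upper bound I will use the boundary barrier of Du--Lin. Fix a large $M$ and, on $\Omega:=\{(s,x):s>0,\ h(s)-M^{-1}<x<h(s)\}$ (which lies in $\{0<x<h(s)\}$ once $M>1/h_0$, since $h\ge h_0$), set $V(s,x):=C_1\big[2M(h(s)-x)-M^2(h(s)-x)^2\big]$. Using $h'>0$, a direct computation gives $V_s-d_1V_{xx}\ge 2d_1C_1M^2$, whereas $u_t-d_1u_{xx}=-au+H(v)\le H(C_2)$; moreover $V\ge u$ on the whole parabolic boundary of $\Omega$, namely $V=0=u$ on $x=h(s)$, $V=C_1\ge u$ on $x=h(s)-M^{-1}$, and $V(0,\cdot)\ge u_0$ on $(h_0-M^{-1},h_0)$ provided $C_1M\ge\|u_0'\|_\infty$, using $u_0(x)\le\|u_0'\|_\infty(h_0-x)$. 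Taking $M$ large enough that $M>1/h_0$, $2d_1C_1M^2\ge H(C_2)$ and $C_1M\ge\|u_0'\|_\infty$, the comparison principle yields $u\le V$ on $\Omega$, hence $-u_x(s,h(s))\le-V_x(s,h(s))=2C_1M$ for all $s>0$; the symmetric barrier with $d_2,C_2,G(C_1),\|v_0'\|_\infty$ bounds $-v_x(s,h(s))$. Therefore $0<h'(t)=-\mu_1u_x(t,h(t))-\mu_2v_x(t,h(t))\le C$ with $C$ depending only on the data.

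\emph{Regularity.} With $u,v$ and $h'$ bounded, straighten the front via $y=h_0x/h(t)$, which maps $[0,h(t)]$ onto the fixed interval $[0,h_0]$ and turns \eqref{1.2} into a uniformly parabolic system on $[0,h_0]$ whose coefficients are bounded on every strip $[\tau,T]\times[0,h_0]$ (since $h\ge h_0$ and $h,h'\in L^\infty_{\mathrm{loc}}$) and whose reaction terms are Lipschitz on the range of $(u,v)$. Parabolic $L^p$ estimates give $u,v\in W^{1,2}_p$ on such strips for every $p$, hence $u,v\in C^{\alpha/2,\,\alpha}$ by Sobolev embedding; then $H(v),G(u)$ are Hölder and parabolic Schauder estimates give $(u,v)\in[C^{1+\alpha/2,\,2+\alpha}([\tau,T]\times[0,h(t)])]^2$. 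Consequently $u_x,v_x\in C^{(1+\alpha)/2,\,1+\alpha}$, and since $h$ is Lipschitz in $t$, the map $t\mapsto u_x(t,h(t))$ is $C^{(1+\alpha)/2}$, whence $h'\in C^{(1+\alpha)/2}([\tau,T])$, i.e.\ $h\in C^{1+\frac{1+\alpha}{2}}([\tau,T])$. (Alternatively, with the bounds in hand one may simply cite \cite[Lemma 2.2]{DL} and \cite[Theorem 8.6, p.\,198]{Wpara}, as the statement suggests.)

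\emph{Main obstacle.} The only delicate point is obtaining the upper bound on $h'$ that is \emph{uniform in $t$} with a constant depending only on the listed data: the barrier must be run from $s=0$, so that the control of $u$ near the front at the bottom of $\Omega$ comes from $\|u_0'\|_\infty$ rather than from an a priori (unavailable) gradient bound at a positive time, and one must verify $h(s)-M^{-1}>0$ throughout, which is exactly why $h'>0$ — hence $h\ge h_0$ — is recorded first; everything else is routine.
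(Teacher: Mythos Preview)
Your proof is correct and follows precisely the standard route the paper points to: the paper omits all details and simply cites \cite[Lemma~2.2]{DL} for the $L^\infty$ and $h'$ bounds (which is exactly your Du--Lin barrier argument) and \cite[Theorem~8.6, p.\,198]{Wpara} for the regularity (which is your straighten-then-$L^p$-then-Schauder bootstrap). There is nothing to add.
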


Then we show the rate of convergence of solution $(\varphi,\psi)$ of semi-wave problem \eqref{1.4}.

\begin{lem}\label{l2.1} Let $(\varphi,\psi)$ be the unique monotone solution of semi-wave problem \eqref{1.4}. Then there exists a positive constant $\alpha$, depending only on the parameters in \eqref{1.4} as well as the nonlinear terms $H$ and $G$, such that
\bes\label{2.1}
u^*-\varphi(x)+v^*-\psi(x)+\varphi'(x)+\psi'(x)=O(e^{-\alpha x}).
\ees
\end{lem}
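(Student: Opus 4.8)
The plan is to show the convergence is exponential by linearizing the semi-wave system at its limiting state $(u^*,v^*)$ and applying standard ODE asymptotic theory. First I would set $\phi(x) = u^* - \varphi(x)$ and $\chi(x) = v^* - \psi(x)$, so that $\phi,\chi \to 0$, $\phi,\chi > 0$, and $\phi' = -\varphi' < 0$, $\chi' = -\psi' < 0$ as $x\to\yy$. Substituting into \eqref{1.4} and using $au^* = H(v^*)$, $bv^* = G(u^*)$, I would write $H(v^*) - H(\psi) = H'(v^*)\chi + o(\chi)$ and $G(u^*) - G(\varphi) = G'(u^*)\phi + o(\phi)$ via Taylor expansion (here (H) gives $H,G\in C^2$), obtaining the perturbed linear system
\bess
\left\{\!\begin{aligned}
&d_1\phi'' - c\phi' - a\phi + H'(v^*)\chi = F_1(x),\\
&d_2\chi'' - c\chi' - b\chi + G'(u^*)\phi = F_2(x),
\end{aligned}\right.
\eess
where $F_1 = o(\chi)$, $F_2 = o(\phi)$ as $x\to\yy$. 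Rewriting this as a first-order system in $\mathbf{Y} = (\phi,\phi',\chi,\chi')^T$ gives $\mathbf{Y}' = A\mathbf{Y} + \mathbf{R}(x)$ with constant matrix $A$ and remainder $\mathbf{R}(x) = o(|\mathbf{Y}(x)|)$.

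The key algebraic point is that the constant-coefficient linearization has no eigenvalue on the imaginary axis and in fact has a two-dimensional stable subspace with strictly negative real parts, so the decay to zero forces $\mathbf{Y}(x)$ onto that stable manifold and yields $|\mathbf{Y}(x)| = O(e^{-\alpha x})$ for some $\alpha>0$. To see this, I would compute the characteristic equation: seeking solutions $\phi = p e^{-\lambda x}$, $\chi = q e^{-\lambda x}$ of the homogeneous system leads to
\bess
(d_1\lambda^2 + c\lambda - a)(d_2\lambda^2 + c\lambda - b) - H'(v^*)G'(u^*) = 0.
\ees
Since $\mathcal{R}_0 = H'(0)G'(0)/(ab) > 1$ and, by concavity of $H,G$, $H'(v^*) < H'(0)$, $G'(u^*) < G'(0)$, one checks that the product $ab - H'(v^*)G'(u^*)$ has a definite sign; evaluating the left side at $\lambda = 0$ gives $ab - H'(v^*)G'(u^*)$, and I would argue this is positive (this is essentially the linearized stability of $(u^*,v^*)$ for the corresponding kinetic system, which underlies the spreading-vanishing picture). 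A sign/degree analysis of this quartic in $\lambda$ — together with the fact that $(\phi,\chi)$ and their derivatives all tend to zero — shows there are exactly two roots with positive real part (contributing the admissible decaying modes $e^{-\lambda x}$) and hence a genuine exponential rate; I would take $\alpha$ to be slightly less than the smaller positive real part among the relevant roots.

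With the spectral picture in hand, the conclusion follows from a routine application of the stable-manifold / Levinson-type asymptotic theorem for systems $\mathbf{Y}' = (A + o(1))\mathbf{Y}$ whose solutions decay: any solution tending to $0$ does so at the exponential rate governed by the stable spectrum of $A$, i.e. $|\mathbf{Y}(x)| \le C e^{-\alpha x}$. Translating back, $u^* - \varphi(x) + v^* - \psi(x) + \varphi'(x) + \psi'(x) = O(e^{-\alpha x})$, which is \eqref{2.1}; the dependence of $\alpha$ only on the parameters and on $H,G$ is clear from the construction. Alternatively — and this may be cleaner to write — I would avoid invoking the general theorem and instead build explicit exponential super-solutions: fix $\alpha>0$ smaller than the smallest positive real part of the roots of the characteristic quartic, and verify that $K e^{-\alpha x}(1,1)$ (for $K$ large) is a super-solution for the cooperative linear system controlling $(\phi,\chi)$ once $x$ is large enough that the $o(1)$ remainder terms are dominated; a comparison argument on $[X,\yy)$ then gives $\phi,\chi \le K e^{-\alpha x}$, and the derivative bounds $\varphi',\psi' = O(e^{-\alpha x})$ follow by applying interior parabolic/elliptic estimates (or simply integrating the equations) to the already-controlled $\phi,\chi$.

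The main obstacle I anticipate is purely algebraic: establishing that the characteristic quartic has precisely the right root configuration — two admissible modes with strictly positive real part and none on the imaginary axis — and in particular confirming the sign $ab - H'(v^*)G'(u^*) > 0$, which is what makes $(u^*,v^*)$ a stable node-type state for the system and is the crux of getting a clean exponential rate rather than an algebraic one. Once that sign and root count are pinned down, the comparison/super-solution step and the passage to the derivative estimates are standard.
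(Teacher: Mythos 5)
Your proposal is correct but takes a genuinely different route from the paper. The paper handles the first part of the estimate, $u^*-\varphi(x)+v^*-\psi(x)=O(e^{-\beta x})$, by simply invoking Theorem 1.1 of its companion paper \cite{LLW}, which already establishes the sharper asymptotics $(u^*-\varphi,v^*-\psi)=e^{-\beta x}(p+o(1),q+o(1))$. The only new work in the paper's proof is the derivative bound: it notes that $f(x):=\tfrac{1}{d_1}(-a\varphi+H(\psi))=O(e^{-\beta x})$ (since $au^*=H(v^*)$), writes $\varphi''-\tfrac{c}{d_1}\varphi'+f=0$, uses positivity of $\varphi'$ to replace $c/d_1$ by a small $\ep$, multiplies by the integrating factor $e^{-\ep x}$, integrates from $x$ to $A$, and lets $A\to\infty$; the same device handles $\psi'$. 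Your approach, by contrast, re-derives everything from scratch: you linearize around $(u^*,v^*)$, identify the characteristic quartic $(d_1\lambda^2+c\lambda-a)(d_2\lambda^2+c\lambda-b)-H'(v^*)G'(u^*)=0$, and argue for exponential decay either by a Levinson-type theorem or by cooperative super-solutions of the form $Ke^{-\alpha x}(1,1)$. Your spectral picture is in fact clean: $ab-H'(v^*)G'(u^*)>0$ follows directly from the strict concavity in (H) (since $H(v^*)>H'(v^*)v^*$ and $G(u^*)>G'(u^*)u^*$, multiply and cancel), and since the quartic is negative at the positive and negative roots of each quadratic factor and tends to $+\infty$ at $\pm\infty$, it has exactly two positive and two negative real roots, so no modes on the imaginary axis. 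The trade-off is self-containedness versus brevity: your route reproves what \cite{LLW} already gives (and bundles in the derivative estimate automatically, since $\mathbf{Y}$ carries $\phi',\chi'$), whereas the paper's integrating-factor trick for the derivatives is a two-line argument that crucially exploits the monotonicity $\varphi'>0$ and avoids any spectral analysis. If you take the super-solution alternative in your last paragraph rather than citing Levinson, be aware you still need a preliminary step to get onto the region where the $o(1)$ remainder is small and to fix the constant $K$ so that the boundary comparison at some $x=X$ holds; that step implicitly requires knowing $\phi,\chi\to 0$, which you have, so this is a routine gap rather than a real one.
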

\begin{proof}Note that $0\le\varphi(x)<u^*$, $0\le \psi(x)<v^*$, $\varphi'(x)>0$ and $\psi'(x)>0$ for $x\ge0$. By \cite[Theorem 1.1]{LLW}, we have $(u^*-\varphi(x),v^*-\psi(x))=e^{-\beta x}(p+o(1),q+o(1))$ for some positive constants $\beta$, $p$ and $q$, which implies that $u^*-\varphi(x)+v^*-\psi(x)=O(e^{-\beta x})$. Hence to prove \eqref{2.1}, it suffices to show $\varphi'(x)+\psi'(x)=O(e^{-\beta x})$.

Let us prove $\varphi'(x)=O(e^{-\beta x})$. Note that $au^*=H(v^*)$. It is easy to see that $a\varphi(x)-H(\psi(x))=O(e^{-\beta x})$. For clarity, denote $f(x)=\frac{-a\varphi(x)+H(\psi(x))}{d_1}$. In view of the identity of $\varphi$, we have
\[\varphi''-\frac{c}{d_1}\varphi'+f(x)=0, ~ ~ x>0.\]
Choose $\ep$ to be small enough such that $\ep\in(0,{c}/{d_1})$. Since $\varphi'(x)>0$ for $x\ge0$, we have
\bess
\varphi''-\ep\varphi'+f(x)>0, ~ ~ x>0.
\eess
Multiplying $e^{-\ep x}$ to the above identity and integrating it from $x$ to $A$ yield
\bess
e^{-\ep A}\varphi'(A)-e^{-\ep x}\varphi'(x)+\int_{x}^{A}e^{-\ep y}f(y)\dy>0.
\eess
Multiplying $e^{(\beta+\ep) x}$ to the above inequality gives
\bess
e^{\beta x}\varphi'(x)&\le& e^{(\beta+\ep) x-\ep A}\varphi'(A)+e^{(\beta+\ep) x}\int_{x}^{A}e^{-\ep y}f(y)\dy\\
&\le& e^{(\beta+\ep) x-\ep A}\varphi'(A)+Ce^{(\beta+\ep) x}\int_{x}^{A}e^{-(\ep+\beta) y}\dy\\
&=& e^{(\beta+\ep) x-\ep A}\varphi'(A)+\frac{C}{\ep+\beta}(1-e^{(\beta+\ep) x-(\ep+\beta)A}).
\eess
Notice that $\varphi'(x)$ is bounded for $x\ge0$. Letting $A\to\yy$, we obtain
\bess
e^{\beta x}\varphi'(x)\le\frac{C}{\ep+\beta} ~ ~ {\rm for ~ }x>0,
\eess
which implies that $\varphi'(x)=O(e^{-\beta x})$. Similarly, it is not hard to show that $\psi'(x)=O(e^{-\beta x})$. Thus \eqref{2.1} is obtained and the proof is finished.
\end{proof}

Next we give a lower bound for solution component $(u,v)$ when $\mathbb{B}[w]=w$ that is crucial for the constructing of lower solution. The idea of proof comes from \cite[Lemma 2.8]{PWZ} or \cite[Lemma 6.5]{DLou}.

\begin{lem}\label{l2.2}Let $(u,v,h)$ be the unique solution of \eqref{1.2} with $\mathbb{B}[w]=w$. Assume $(d_1,a,H'(v^*))=(d_2,b,G'(u^*))$. Then for any $0<c_1<c_2<c_0$, there exist positive constants $\delta$, $M$ and $T$ such that
\bess
u(t,x)\ge u^*-Me^{-\delta t} ~ {\rm and ~} v(t,x)\ge v^*-Me^{-\delta t} ~ ~ {\rm for ~ }t\ge T, ~ x\in[c_1t,c_2t].
\eess
\end{lem}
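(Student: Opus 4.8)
The plan is to establish the lower bound via a comparison argument, constructing a subsolution that is supported on a moving window $[c_1t,c_2t]$ and that sits below $(u,v)$ there. First I would recall from the spreading speed result \eqref{1.5} that, since $c_2<c_0$, for any small $\eta>0$ there is a time $T_1$ with $u(t,x)\ge u^*-\eta$ and $v(t,x)\ge v^*-\eta$ for all $t\ge T_1$ and $x\in[0,c_2t]$; in particular $(u,v)$ is already close to $(u^*,v^*)$ uniformly on the whole window, and on the lateral boundaries $x=c_1t$ and $x=c_2t$ of the moving region we have $u,v\ge u^*-\eta, v^*-\eta$. The idea (following \cite[Lemma 2.8]{PWZ} and \cite[Lemma 6.5]{DLou}) is to then improve this $\eta$-closeness to an exponentially decaying error by a linearization argument around $(u^*,v^*)$.

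Next I would set $U=u^*-u$, $V=v^*-v$, which are small and nonnegative-ish on the window; they satisfy, up to higher-order terms, the linear system $U_t=d_1U_{xx}+aU-H'(v^*)V + O(V^2)$ and $V_t=d_2V_{xx}+bV-G'(u^*)U+O(U^2)$. The key point is that, because $\mathcal R_0>1$ forces $(u^*,v^*)$ to be a stable equilibrium of the kinetic system, the matrix $\begin{pmatrix} a & -H'(v^*)\\ -G'(u^*) & b\end{pmatrix}$ (note the sign convention after the substitution) governing the decay has its relevant eigenvalue configuration giving exponential contraction; quantitatively $H'(v^*)G'(u^*)<ab$, which is exactly the condition that makes $(u^*,v^*)$ linearly stable. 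Here is precisely where the technical hypothesis $(d_1,a,H'(v^*))=(d_2,b,G'(u^*))$ enters: with $d_1=d_2=:d$, $a=b$ and $H'(v^*)=G'(u^*)=:\gamma$, the sum $W=U+V$ satisfies a scalar inequality $W_t\le dW_{xx}-(\gamma-a)W + o(W)$ — wait, one must check the sign; with the stability condition $\gamma<a$ (since $\gamma^2<a^2$) one gets genuine exponential decay of $W$, hence of both $U$ and $V$ separately. This decoupling into a single scalar equation is what the symmetry assumption buys, and it lets me build an explicit supersolution for $W$.

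Concretely, I would look for a supersolution of $W$ of the form $\bar W(t,x) = M e^{-\delta t}\big(1 + \text{cutoff}\big)$, or more carefully, a function that dominates $W$ on the parabolic boundary of $\{t\ge T,\ c_1t\le x\le c_2t\}$: at $t=T$ use the $\eta$-bound, and on the moving lateral boundaries $x=c_it$ use again the $\eta$-bound (these are constants independent of $t$, so any $Me^{-\delta t}$ with $M$ large and $\delta$ small dominates them for $t\ge T$ after enlarging $T$). Choosing $\delta>0$ small relative to $a-\gamma$ and $M$ large, $Me^{-\delta t}$ is a spatially-constant supersolution of the scalar inequality for $W$ on the interior of the region, provided the $o(W)$ error terms are controlled — which holds once $\eta$ is chosen small enough that the quadratic remainders are dominated by a fixed fraction of the linear term. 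Then $W(t,x)\le Me^{-\delta t}$ on the window, which gives $u^*-u\le W\le Me^{-\delta t}$ and likewise for $v$, as desired. The moving lateral boundaries do require a little care because $x=c_it$ are not characteristic, but since $W$ is a subsolution and we only need an upper bound on $W$ there (supplied by \eqref{1.5}), the comparison still goes through on the time-dependent domain.

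I expect the main obstacle to be making the scalar reduction rigorous while handling the quadratic error terms and the non-constant domain simultaneously: one must verify that the substitution genuinely yields a differential \emph{inequality} for $W=U+V$ of the form $W_t - dW_{xx} \le -(a-\gamma)W + C(U^2+V^2)$ with the right sign, that $C(U^2+V^2)\le \tfrac12(a-\gamma)W$ once $\eta$ is small (using $U^2+V^2\le (U+V)^2 = W^2 \le \eta W$), and that the comparison principle applies on $\{c_1t\le x\le c_2t,\ t\ge T\}$ with the boundary data coming from the already-established uniform-in-$x$ convergence on $[0,c_2t]$. A secondary subtlety is that $U,V$ need not be pointwise nonnegative on the window — only small — but this does not affect the argument since we only compare $W$ from above; if one wants cleanliness one can instead bound $|U|+|V|$ and absorb signs into the constant. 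Once these points are pinned down, the conclusion follows by taking $\delta$ and $T$ from this construction and $M$ as the chosen constant.
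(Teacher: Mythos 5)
Your central ideas — exploit the symmetry hypothesis $d_1=d_2$, $a=b$, $H'(v^*)=G'(u^*)$ to add the two linearized equations and reduce to a scalar inequality for $W=u^*-u+v^*-v$ (the paper works with $\Psi=\tilde u+\tilde v$, the same thing up to sign), and then use a comparison argument — do match the paper's strategy and correctly identify where the symmetry assumption is needed. You also correctly flag the stability condition $H'(v^*)<a$ as the source of the damping.

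However, the comparison step as you describe it does not work, and this is a fatal gap. You claim that on the lateral boundaries $x=c_it$ the boundary data is a constant $\eta$-bound, ``so any $Me^{-\delta t}$ with $M$ large and $\delta$ small dominates them for $t\ge T$ after enlarging $T$.'' This is false: $Me^{-\delta t}\to 0$ as $t\to\infty$ while $\eta>0$ is fixed, so for every choice of $M,\delta,T$ there is some $t_*$ beyond which $Me^{-\delta t}<\eta$ and the supersolution drops below the available boundary estimate. The comparison cannot be set up on the whole moving region $\{(t,x):t\ge T,\ c_1t\le x\le c_2t\}$ with a spatially constant barrier, and no amount of ``enlarging $T$'' repairs this. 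The paper circumvents this precisely by \emph{not} comparing on the moving domain: for each $T$ it freezes the window $[c_1T,c_2T]$ (whose width grows like $T$), compares $\Psi$ with an auxiliary solution $\phi$ of the same linear damped heat equation on that fixed window with constant boundary data $u^*+v^*-\ep$, and then uses a heat-kernel/Green's-function estimate (the \cite[Lemma 2.8]{PWZ} device) to show that at time $t\approx(1+\kappa)T$ the solution $\phi$ has improved to within $O(e^{-\delta T})$ of $u^*+v^*$ on a slightly shrunken interior sub-window; letting $T$ range and shrinking $[c_1,c_2]$ slightly reassembles the estimate on the moving window. In short, what saves the argument is that the \emph{width} of the window grows with $T$, giving the diffusive dynamics time $\sim T$ to push the interior exponentially close to equilibrium before the boundary layer from the constant lateral data can contaminate it — a quantitatively different mechanism than the one you propose.

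A secondary (but also real) omission: you worry that $U,V$ may change sign and suggest ``absorbing signs into the constant,'' but the one-sided linearization $f_1(\tilde u,\tilde v)\ge -\gamma_2(\tilde u-u^*)+\gamma_1(\tilde v-v^*)$ (and similarly for $f_2$) genuinely requires $\tilde u\le u^*$, $\tilde v\le v^*$; otherwise the cross term has the wrong sign. The paper arranges this by first comparing with an auxiliary free boundary problem \eqref{2.2} whose initial data is $\le\min\{u(T_1,\cdot),u^*\}$ and $\le\min\{v(T_1,\cdot),v^*\}$, so its solution $(\tilde u,\tilde v)$ is sandwiched below $(u^*,v^*)$ for all time. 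Your sketch does not supply a substitute for this step (Remark~\ref{r2.1} would give an almost-nonnegativity up to $O(e^{-\delta t})$, but you did not invoke or prove such a bound).
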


\begin{proof}Since spreading happens, we can find a $T_1>0$ such that $h(T_1)\ge l_0$, where $l_0$ is defined by \eqref{1.3}. Then we consider the following problem
 \bes\begin{cases}\label{2.2}
\tilde u_t=d_1\tilde u_{xx}-a\tilde u+H(\tilde v), &t>0,~x\in(0,\tilde h(t)),\\
\tilde v_t=d_1\tilde v_{xx}-b\tilde v+G(\tilde u), &t>0,~x\in(0,\tilde h(t)),\\
\tilde u(t,0)=\tilde v(t,0)=\tilde u(t,\tilde h(t))=\tilde v(t,\tilde h(t))=0, \; &t>0,\\
\tilde h'(t)=-\mu_1 \tilde u_x(t,\tilde h(t))-\mu_2\tilde v_x(t,\tilde h(t)), & t>0,\\
\tilde h(0)=h(T_1), ~ \tilde u(0,x)=\tilde u_{0}(x), ~ \tilde v(0,x)=\tilde v_0(x),&0\le x\le h(T_1),
 \end{cases}
 \ees
where the initial functions $\tilde{u}_0$ and $\tilde{v}_0$ satisfy condition {\bf(I)} and
\[\tilde{u}_0(x)\le \min\{u(T_1,x),u^*\}, ~ ~ \tilde{v}_0(x)\le\min\{v(T_1,x),v^*\} ~ ~ {\rm for ~ }x\in[0,h(T_1)].\]
By a comparison argument, we have
\bes\label{2.3}
(\min\{u^*,u(t+T_1,x)\},\min\{v^*, v(t+T_1,x)\})\ge(\tilde{u}(t,x),\tilde{v}(t,x)) ~ ~ {\rm for ~ }t\ge0, ~ x\in[0,\tilde{h}(t)].\ees
Moreover, due to $\tilde{h}(0)=h(T_1)>l_0$, spreading happens for \eqref{2.2} and \eqref{1.5} holds for $(\tilde{u},\tilde{v},\tilde{h})$.

Next we estimate $(\tilde{u},\tilde{v})$. For clarity, denote
\[f_1(\tilde u,\tilde v)=-a\tilde u+H(\tilde v), ~ ~ f_2(\tilde u,\tilde v)=-b\tilde v+G(\tilde u).\]
Thanks to {\bf (H)}, $a=b$ and $H'(v^*)=G'(u^*)$, it is easy to see that $a=b>H'(v^*)=G'(u^*)$. Thus we can choose positive constants $\gamma_1$ and $\gamma_2$ satisfying  $H'(v^*)<\gamma_1<\gamma_2<a$. Simple calculations show that there exists a small $\ep>0$ such that when $u^*-\tilde{u}\le\ep$ and $v^*-\tilde{v}\le\ep$,
\bess
f_1(\tilde u,\tilde v)&=&f_1(\tilde u,\tilde v)-f_1(u^*,v^*)=(\partial_{\tilde{u}} f_1(u^*,v^*)+o(1))(\tilde u-u^*)+(\partial_{\tilde{v}} f_1(u^*,v^*)+o(1))(\tilde v-v^*)\\
&=&(-a+o(1))(\tilde u-u^*)+(H'(v^*)+o(1))(\tilde v-v^*)\\
&\ge&-\gamma_2(\tilde u-u^*)+\gamma_1(\tilde v-v^*)
\eess
where $o(1)\to0$ as $(\tilde{u},\tilde{v})\to(u^*,v^*)$. Similarly, we can obtain that for some small $\ep>0$, there holds
\[f_2(\tilde u,\tilde v)\ge-\gamma_2(\tilde v-v^*)+\gamma_1(\tilde u-u^*) ~ ~ {\rm for ~ }\tilde u\in[u^*-\ep,u^*], ~ \tilde
v\in[v^*-\ep,v^*].\]

 Notice that $(U(x),V(x))\to (u^*,v^*)$ as $x\to\yy$. By \eqref{1.5}, there exists a $T_2>T_1$ such that
\[h(t)>c_2t, ~ ~(\tilde u(t,x),\tilde v(t,x))\ge (u^*-\ep/2,v^*-\ep/2) ~ ~ {\rm for ~ }t\ge T_2, ~ x\in[c_1t,c_2t].\]
Fix $T\ge T_2$ and denote $\Psi=\tilde u+\tilde v$. Then $\Psi$ satisfies
 \bess\begin{cases}
 \Psi_t\ge d_1\Psi_{xx}-(\gamma_2-\gamma_1)\Psi+(\gamma_2-\gamma_1)(u^*+v^*), &t\ge T, ~ x\in(c_1T,c_2T),\\
\Psi(t,c_1T)\ge u^*+v^*-\ep, ~ \Psi(t,c_2T)\ge u^*+v^*-\ep, &t\ge T,\\
 \Psi(T,x)\ge u^*+v^*-\ep, &x\in[c_1T,c_2T].
 \end{cases}
 \eess
 Consider the following auxiliary problem
 \bess\begin{cases}
 \phi_t=d_1\phi_{xx}-(\gamma_2-\gamma_1)\phi+(\gamma_2-\gamma_1)(u^*+v^*), &t>0, ~ x\in(c_1T,c_2T),\\
 \phi(t,c_1T)= u^*+v^*-\ep, ~ \phi(t,c_2T)=u^*+v^*-\ep, &t>0,\\
 \phi(0,x)=u^*+v^*-\ep, &x\in[c_1T,c_2T].
 \end{cases}
 \eess
 In light of a comparison argument, we have $\Psi(t+T,x)\ge\phi(t,x)$ for $t\ge0$ and $x\in[c_1T,c_2T]$.

 Let
 \[\Phi(t,x)=e^{(\gamma_2-\gamma_1)t}[\phi(t,x+\frac{c_1T+c_2T}{2})-(u^*+v^*-\ep)], ~ ~ \bar{c}=\frac{c_2-c_1}{2}.\]
 Then clearly $\Phi$ satisfies
  \bess\begin{cases}
 \Phi_t=d_1\Phi_{xx}+(\gamma_2-\gamma_1)\ep e^{(\gamma_2-\gamma_1)t}, &t>0, ~ x\in(-\bar{c}T,\bar{c}T),\\
 \Phi(t,\pm\bar{c}T)=0, &t>0,\\
 \Phi(0,x)=0, &x\in[-\bar c T,\bar c T].
 \end{cases}
 \eess
 By the Green function of the heat equation, we have
 \[\Phi(t,x)=(\gamma_2-\gamma_1)\ep\int_{0}^{t}e^{(\gamma_2-\gamma_1)\tau}\int_{-\bar{c}T}^{\bar{c}T}\tilde{G}(t,x;\tau,\xi){\rm d}\xi{\rm d}\tau, ~t>0, ~ -\bar{c}T<x<\bar{c}T,\]
 where $\tilde{G}(t,x;\tau,\xi)$ is the Green function (see, e.g. \cite{Fri}, pp 84) defined by
 \[\tilde{G}(t,x;\tau,\xi)=\sum_{n\in\mathbb{Z}}(-1)^nG(t-\tau,x-\xi-2n\bar{c}T)\]
 with the heat kernel $G$ given by
 \[G(t,x)=\frac{1}{\sqrt{4\pi d_1t}}e^{-\frac{x^2}{4d_1t}}.\]
 In view of \cite[Lemma 2.8]{PWZ}, for any $\ep_1\in(0,1)$, there exists a $T_3\gg\max\{T_2,1\}$ such that for all $T\ge T_3$,
 \[\int_{-\bar{c}T}^{\bar{c}T}\tilde{G}(t,x;\tau,\xi){\rm d}\xi\ge1-\frac{4}{\sqrt{\pi}}e^{-\frac{T}{2\sqrt{d_1}}}\]
 holds for all $(t,x)\in D_{\ep_1}$ defined by
 \[D_{\ep_1}=\{(t,x):0<t\le \frac{\ep^2_1\bar{c}^2T}{4\sqrt{d_1}},~ |x|\le(1-\ep_1)\bar{c}T\}.\]
 Hence we get
 \bess
 \Phi(t,x)\ge \ep(e^{(\gamma_2-\gamma_1)t}-1)(1-\frac{4}{\sqrt{\pi}}e^{-\frac{T}{2\sqrt{d_1}}})
 \eess
 for $(t,x)\in D_{\ep_1}$. Furthermore, we have that for all $(t,x)\in D_{\ep_1}$,
 \bess
 \phi(t,x+\frac{c_1T+c_2T}{2})&=&e^{-(\gamma_2-\gamma_1)t}\Phi(t,x)+u^*+v^*-\ep\\
 &\ge&\ep(1-e^{-(\gamma_2-\gamma_1)t})(1-\frac{4}{\sqrt{\pi}}e^{-\frac{T}{2\sqrt{d_1}}})+u^*+v^*-\ep\\
 &\ge& u^*+v^*-\ep e^{-(\gamma_2-\gamma_1)t}-\ep\frac{4}{\sqrt{\pi}}e^{-\frac{T}{2\sqrt{d_1}}}
 \eess
 which implies that for all $(t,x)\in \tilde{D}_{\ep_1}$ defined by
\[\tilde{D}_{\ep_1}=\{(t,x): 0<t\le \frac{\ep^2_1\bar{c}^2T}{4\sqrt{d_1}}, ~ (c_1+\ep_1\bar{c})T\le x\le (c_2-\ep_1\bar{c})T\},\]
 there holds:
 \[ \phi(t,x)\ge u^*+v^*-\ep e^{-(\gamma_2-\gamma_1)t}-\ep\frac{4}{\sqrt{\pi}}e^{-\frac{T}{2\sqrt{d_1}}}.\]
 Choose $t=\frac{\ep^2_1\bar{c}^2T}{4\sqrt{d_1}}$ and take $\ep_1$ small enough such that
 \[\frac{(\gamma_2-\gamma_1)\ep^2_1\bar{c}^2}{4\sqrt{d_1}}\le\frac{1}{2\sqrt{d_1}}.\]
 Then  for $ (c_1+\ep_1\bar{c})T\le x\le (c_2-\ep_1\bar{c})T$,
\bess
 \phi(\frac{\ep^2_1\bar{c}^2T}{4\sqrt{d_1}},x)&\ge& u^*v^*-\ep e^{-(\gamma_2-\gamma_1)\frac{\ep^2_1\bar{c}^2T}{4\sqrt{d_1}}}-\ep{\sqrt{\pi}}e^{-\frac{T}{2\sqrt{d_1}}}\\
 &\ge&u^*+v^*-\ep(1+\frac{4}{\sqrt{\pi}})e^{-(\gamma_2-\gamma_1)\frac{\ep^2_1\bar{c}^2T}{4\sqrt{d_1}}}.
\eess
Thus
\bess
\Psi(\frac{\ep^2_1\bar{c}^2T}{4\sqrt{d_1}}+T,x)\ge u^*+v^*-\ep(1+\frac{4}{\sqrt{\pi}})e^{-(\gamma_2-\gamma_1)\frac{\ep^2_1\bar{c}^2T}{4\sqrt{d_1}}}
\eess
with $ (c_1+\ep_1\bar{c})T\le x\le (c_2-\ep_1\bar{c})T$.
Let
\[t=\frac{\ep^2_1\bar{c}^2T}{4\sqrt{d_1}}+T.\]
Then we have
\bess
\Psi(t,x)\ge u^*+v^*-Me^{-\delta t}
\eess
for
\[ t\ge (1+\frac{\ep^2_1\bar{c}^2}{4\sqrt{d_1}}) T_3, ~ (c_1+\ep_1\bar{c})(1+\frac{\ep^2_1\bar{c}^2}{4\sqrt{d_1}})^{-1}t\le x\le (c_2-\ep_1\bar{c})(1+\frac{\ep^2_1\bar{c}^2}{4\sqrt{d_1}})^{-1}t,\]
where
\[M=\ep(1+\frac{4}{\sqrt{\pi}}), ~ ~ \delta=(\gamma_2-\gamma_1)\frac{\ep^2_1\bar{c}^2}{4\sqrt{d_1}}(1+\frac{\ep^2_1\bar{c}^2}{4\sqrt{d_1}})^{-1}.\]
Combing with the arbitrariness of $c_1$, $c_2$ and $\ep_1$, we have proved that for any given $0<c_1<c_2<c_0$, there exist $M$, $\delta$ and $T$ such that
\[\tilde{u}(t,x)+\tilde{v}(t,x)\ge u^*+v^*-M e^{-\delta t} ~ ~ {\rm for ~ }t\ge T, ~ x\in[c_1t,c_2t],\]
which together with \eqref{2.3} leads to
\[(u(t,x),v(t,x))\ge (u^*-Me^{-\delta (t-T_1)}, v^*-Me^{-\delta (t-T_1)}) ~ {\rm for ~ }t\ge T+T_1, ~ x\in[c_1(t-T_1),c_2(t-T_1)].\]
Note that $c_1,c_2\in(0,c_0)$ are arbitrary and $T_1$ depends only on $l_0$. We thus complete the proof.
\end{proof}

\begin{remark}\label{r2.1}It is easy to show that there exist positive constants $M$ and $\delta$ such that the solution component $(u,v)$ of \eqref{1.2} satisfies $(u(t,x),v(t,x))\le (u^*+Me^{-\delta t},v^*+Me^{-\delta t})$ for $t\ge0$ and $x\ge0$. In fact, we can verify that $(u^*+M_1e^{-\delta t},v^*+M_2e^{-\delta t})$ is an upper solution for \eqref{1.2}, where $M_1$, $M_2$ and $\delta$ satisfy
\bess
&u^*+M_1\ge\|u_0\|_{L^{\yy}([0,h_0])}, ~ v^*+M_2\ge\|v_0\|_{L^{\yy}([0,h_0])}, ~ \frac{b}{G'(u^*)}>\frac{M_1}{M_2}>\frac{H'(v^*)}{a}\\
&\delta\le \min\{\frac{aM_1-H'(v^*)M_2}{M_1}, \frac{bM_2-G'(u^*)M_1}{M_2}\}.
\eess
Then we take $M=\max\{M_1,M_2\}$.
\end{remark}

At the end of this section, we introduce a technical lemma about the property of the nonlinear terms $H$ and $G$.

\begin{lem}[{\cite[Lemma 2.9]{DN3}}]\label{l2.3}Assume that $F=(f_i)\in C^2(\mathbb{R}^m,\mathbb{R}^m)$, ${\bf u^*}>0$, $F({\bf u^*})={\bf 0}$ and ${\bf u^*}[\nabla F({\bf u^*})]^T<{\bf 0}$.
Then there exists a $\delta_0>0$, depending only on $F$, such that for any $0<\ep\ll1$ and ${\bf u},~ {\bf v}\in[(1-\delta_0){\bf u^*},{\bf u^*}]$ satisfying
\[(u^*_i-u_i)(u^*_j-v_j)\le C\delta_0\ep ~ {\rm for ~ some ~ }C>0 {\rm ~ and ~ all ~ }i,~ j\in\{1,2,\cdots,m\},\]
we have
\[(1-\ep)\left[F({\bf u})+F({\bf v})\right]-F((1-\ep)({\bf u}+{\bf v}-{\bf u^*}))\le\frac{\ep}{2}{\bf u^*}[\nabla F({\bf u^*})]^T.\]
\end{lem}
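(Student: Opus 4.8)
\medskip
\noindent The plan is to freeze $\mathbf u,\mathbf v$, localize near $\mathbf u^*$, and reduce the claim to a one--parameter estimate in $\ep$. Put $\mathbf p:=\mathbf u^*-\mathbf u$ and $\mathbf q:=\mathbf u^*-\mathbf v$, so that $\mathbf 0\le\mathbf p,\mathbf q\le\delta_0\mathbf u^*$ and $p_kq_l\le C\delta_0\ep$ for all $k,l$, and set $\ell_i:=\sum_j u^*_j\,\partial_j f_i(\mathbf u^*)$, so that $(\ell_1,\dots,\ell_m)=\mathbf u^*[\nabla F(\mathbf u^*)]^T<\mathbf 0$ by hypothesis. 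For $s\in[0,\ep]$ and $1\le i\le m$ define
\[
g_i(s):=(1-s)\big[f_i(\mathbf u)+f_i(\mathbf v)\big]-f_i\big((1-s)(\mathbf u+\mathbf v-\mathbf u^*)\big);
\]
the assertion is exactly that $g_i(\ep)\le\tfrac{\ep}{2}\ell_i$ for every $i$. Since $F\in C^2$, each $g_i\in C^1([0,\ep])$ and $g_i(\ep)=g_i(0)+\int_0^\ep g_i'(s)\,ds$, so it suffices to estimate $g_i(0)$ and $g_i'$. I would first fix $\delta_0$ small (in terms of $F$) so that, for $0<\ep\le\delta_0$, all arguments of $F,\nabla F,\nabla^2F$ occurring below lie in a fixed compact neighbourhood $\mathcal N$ of $\mathbf u^*$; on $\mathcal N$ the relevant suprema of first and second derivatives of $F$ are constants depending only on $F$.

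The heart of the matter is the bound on $g_i(0)=f_i(\mathbf u^*-\mathbf p)+f_i(\mathbf u^*-\mathbf q)-f_i(\mathbf u^*-\mathbf p-\mathbf q)$, where $F(\mathbf u^*)=\mathbf 0$ has been used to delete a term. Setting $\Phi_i(\sigma,\tau):=f_i(\mathbf u^*-\sigma\mathbf p-\tau\mathbf q)$ on $[0,1]^2$ and using $\Phi_i(0,0)=0$, one has the mixed second--difference identity $g_i(0)=-\big(\Phi_i(1,1)-\Phi_i(1,0)-\Phi_i(0,1)+\Phi_i(0,0)\big)=-\int_0^1\!\!\int_0^1\partial_\sigma\partial_\tau\Phi_i\,d\sigma\,d\tau$, while a direct computation gives $\partial_\sigma\partial_\tau\Phi_i(\sigma,\tau)=\sum_{k,l}\partial^2_{kl}f_i(\mathbf u^*-\sigma\mathbf p-\tau\mathbf q)\,p_kq_l$. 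Hence $|g_i(0)|\le m^2\big(\max_{i,k,l}\sup_{\mathcal N}|\partial^2_{kl}f_i|\big)\max_{k,l}p_kq_l\le K_1C\delta_0\ep$ with $K_1=K_1(F)$. This is precisely where the product hypothesis is indispensable, and I expect it to be the only real obstacle: without it $g_i(0)$ is merely $O(\delta_0^2)$, and since $\nabla^2F$ carries no sign it could dominate the leading term $\ep\ell_i$ once $\ep\ll\delta_0^2$; the condition $p_kq_l\le C\delta_0\ep$ is exactly what turns it into an $O(\delta_0\ep)$ quantity, negligible beside $\ep\ell_i$.

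It remains to estimate $g_i'(s)=-\big[f_i(\mathbf u)+f_i(\mathbf v)\big]+\nabla f_i\big((1-s)(\mathbf u+\mathbf v-\mathbf u^*)\big)\cdot(\mathbf u+\mathbf v-\mathbf u^*)$. Because $f_i(\mathbf u^*)=0$ and $|\mathbf p|,|\mathbf q|\le\delta_0|\mathbf u^*|$, the mean value inequality gives $|f_i(\mathbf u)|+|f_i(\mathbf v)|\le K_2\delta_0$; and since $\mathbf u+\mathbf v-\mathbf u^*=\mathbf u^*-\mathbf p-\mathbf q$ and $(1-s)(\mathbf u^*-\mathbf p-\mathbf q)-\mathbf u^*=-s\mathbf u^*-(1-s)(\mathbf p+\mathbf q)$ has norm at most a constant times $s+\delta_0$, continuity of $\nabla F$ at $\mathbf u^*$ yields
\[
g_i'(s)=\nabla f_i(\mathbf u^*)\cdot\mathbf u^*+E_i(s)=\ell_i+E_i(s),\qquad |E_i(s)|\le K_3(\delta_0+s),
\]
with $K_2,K_3$ depending only on $F$. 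Integrating, $\int_0^\ep g_i'(s)\,ds=\ep\,\ell_i+O\!\big(\ep(\delta_0+\ep)\big)$, and combining with the previous paragraph (and using $\ep\le\delta_0$, which ``$0<\ep\ll1$'' allows) gives $g_i(\ep)\le\ell_i\,\ep+K(C{+}1)\,\delta_0\,\ep$ for all $i$, $K=K(F)$. As $\ell_i<0$, it is enough to have shrunk $\delta_0$ so that $K(C{+}1)\delta_0\le\tfrac12\min_i|\ell_i|$ --- a choice depending on $F$ together with the constant $C$ of the hypothesis; then $g_i(\ep)\le\ell_i\ep+\tfrac{\ep}{2}|\ell_i|=\tfrac{\ep}{2}\ell_i$ for each $i$, which is the claimed vector inequality $(1-\ep)[F(\mathbf u)+F(\mathbf v)]-F((1-\ep)(\mathbf u+\mathbf v-\mathbf u^*))\le\tfrac{\ep}{2}\mathbf u^*[\nabla F(\mathbf u^*)]^T$. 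Apart from the second--difference identity, every ingredient is a routine Taylor/continuity estimate, so the care should be concentrated there and in the bookkeeping of how $\delta_0$ is chosen.
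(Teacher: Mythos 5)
The paper does not prove this lemma: it is quoted verbatim from Du and Ni \cite[Lemma 2.9]{DN3} and used as a black box in the proof of Lemma \ref{l3.3}, so there is no in-paper proof to compare against. Judged on its own, your argument is correct. The decisive step is the mixed second-difference representation of
$g_i(0)=f_i(\mathbf u^*-\mathbf p)+f_i(\mathbf u^*-\mathbf q)-f_i(\mathbf u^*-\mathbf p-\mathbf q)-f_i(\mathbf u^*)$
as $-\int_0^1\!\int_0^1\sum_{k,l}\partial^2_{kl}f_i(\mathbf u^*-\sigma\mathbf p-\tau\mathbf q)\,p_kq_l\,d\sigma\,d\tau$, which is exactly what converts the $O(\delta_0^2)$ quadratic error into the needed $O(\delta_0\ep)$ via the product hypothesis $p_kq_l\le C\delta_0\ep$; without that representation the Hessian term could overwhelm $\ep\ell_i$ when $\ep\ll\delta_0^2$, so you have correctly isolated the only nontrivial ingredient. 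The remaining integration of $g_i'(s)$ with the $O(s+\delta_0)$ error is routine and your bookkeeping is sound.

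One remark, which you yourself flag: the final choice of $\delta_0$ so that $K(C+1)\delta_0\le\frac12\min_i|\ell_i|$ makes $\delta_0$ depend on the constant $C$ appearing in the hypothesis, whereas the lemma asserts $\delta_0$ depends only on $F$. As stated (with the quantifier ``for some $C>0$'') the hypothesis $(u^*_i-u_i)(u^*_j-v_j)\le C\delta_0\ep$ is vacuous unless $C$ is fixed in advance, so this looseness is a defect of the statement inherited from \cite{DN3}, not of your argument; in the only place the lemma is invoked in this paper, the product bound is verified with $C=1$, and your proof gives the result cleanly in that case. Overall the proof is complete and there is no gap beyond this quantifier issue in the original statement.
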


\section{The bound for $h(t)-c_0t$}
In this section, by constructing some suitably upper and lower solutions, we will prove that $h(t)-c_0t$ is bounded for all $t\ge0$. Namely, we are going to prove the following result.

\begin{prop}\label{p3.1} Let $(u,v,h)$ be the unique solution of \eqref{1.2}. If $\mathbb{B}[w]=w$ and $(d_1,a,H'(v^*))=(d_2,b,G'(u^*))$, or $\mathbb{B}[w]=w_x$, then there exists a positive constant $C$, depending only on the parameters of \eqref{1.2}, nonlinear terms $H$ and $G$ as well as the initial functions $u_0$ and $v_0$, such that
\[|h(t)-c_0t|\le C ~ ~{\rm for ~ all ~ }t\ge0.\]
\end{prop}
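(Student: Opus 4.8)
The plan is to establish the two-sided bound $|h(t)-c_0t|\le C$ by constructing an explicit supersolution that forces $h(t)\le c_0t+C_1$ and an explicit subsolution that forces $h(t)\ge c_0t-C_2$, each valid after a finite time, and then absorbing the behavior on any initial interval $[0,T]$ into the constant using Lemma \ref{l2.0} (which gives $0<h'(t)<C$, hence $h(t)-c_0t$ is Lipschitz and cannot blow up in finite time). The backbone of both estimates is the semi-wave profile $(\varphi_{c_0},\psi_{c_0})$ together with its exponential convergence rate $u^*-\varphi+v^*-\psi+\varphi'+\psi'=O(e^{-\alpha x})$ from Lemma \ref{l2.1}; this decay is exactly what lets a spatial shift of the semi-wave be perturbed by a decaying-in-time correction while remaining an ordered super/subsolution.

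For the \emph{upper bound}, I would take an ansatz of the form $\bar h(t)=c_0t+K_1(1-e^{-\sigma t})+\bar h_0$ and $\bar u(t,x)=\varphi_{c_0}(\bar h(t)-x)+\eta_1(t)$, $\bar v(t,x)=\psi_{c_0}(\bar h(t)-x)+\eta_2(t)$ with $\eta_i(t)=M_ie^{-\sigma t}$ for suitable $M_i,\sigma,K_1>0$. Plugging into the PDE, the semi-wave equation kills the leading terms; the residual splits into (i) the term $\bar h'(t)\varphi_{c_0}'(\bar h(t)-x)$ coming from the moving frame, which is positive and, near the front, of size $\asymp e^{-\sigma t}$ because $\varphi_{c_0}'$ is bounded and $\bar h'-c_0 = K_1\sigma e^{-\sigma t}$; (ii) the linear reaction contributions $-a\eta_1+H'(\cdot)\eta_2$ etc. Using the technical hypothesis $(d_1,a,H'(v^*))=(d_2,b,G'(u^*))$ in case $\mathbb{B}[w]=w$ (and the analogous spectral gap $a>H'(v^*)$, $b>G'(u^*)$ that it yields) one arranges $M_1/M_2$ in the cone $H'(v^*)/a < M_1/M_2 < b/G'(u^*)$ — the same device as in Remark \ref{r1.1} / Remark \ref{r2.1} — so that the reaction part dominates and the whole residual is $\ge 0$ for $t$ large; the free-boundary inequality $\bar h'(t)\ge -\mu_1\bar u_x(t,\bar h(t))-\mu_2\bar v_x(t,\bar h(t))$ is checked using $\bar u_x(t,\bar h(t))=-\varphi_{c_0}'(0)$, the defining relation $\mu_1\varphi_{c_0}'(0)+\mu_2\psi_{c_0}'(0)=c_0$, and $\bar h'(t)=c_0+K_1\sigma e^{-\sigma t}\ge c_0$; the boundary condition at $x=0$ holds with slack because $\varphi_{c_0}(\bar h(t))>0$ (Dirichlet case) and $\varphi_{c_0}$ is increasing so $\bar u_x(t,0)=-\varphi_{c_0}'(\bar h(t))<0$ (Neumann case). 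Choosing $\bar h_0$ large enough that initial data is dominated at $t=T_0$ and applying the comparison principle yields $h(t)\le \bar h(t)\le c_0t+C_1$.

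For the \emph{lower bound} — the main obstacle, as the authors themselves flag in Remark \ref{r1.1} — the difficulty in the Dirichlet case is that $(u,v)$ does not converge to the constant $(u^*,v^*)$ but to $(U(x),V(x))$, so one cannot simply subtract a time-decaying constant from the semi-wave near $x=0$. The remedy is Lemma \ref{l2.2}: it gives $u,v\ge u^*-Me^{-\delta t}$, $v^*-Me^{-\delta t}$ on the moving window $x\in[c_1t,c_2t]$ for any $0<c_1<c_2<c_0$. I would therefore build the subsolution only on the region $x\ge c_1 t$ (or $x\ge x_0$ for a fixed large $x_0$, whichever is larger), of the form $\underline h(t)=c_0t-K_2(1-e^{-\sigma t})-\underline h_0$, $\underline u(t,x)=(1-\theta(t))\varphi_{c_0}(\underline h(t)-x)$, $\underline v(t,x)=(1-\theta(t))\psi_{c_0}(\underline h(t)-x)$ with $\theta(t)=M'e^{-\delta t}$, and use Lemma \ref{l2.3} (applied to $F=(f_1,f_2)$, ${\bf u^*}=(u^*,v^*)$, whose hypothesis ${\bf u^*}[\nabla F({\bf u^*})]^T<{\bf 0}$ is precisely $a>H'(v^*)$, $b>G'(u^*)$) to control the nonlinear terms: Lemma \ref{l2.3} converts $(1-\theta)[F({\bf u})+F({\bf v})]-F((1-\theta)(\ldots))$ into a strictly negative quantity $\le \frac{\theta}{2}{\bf u^*}[\nabla F]^T$, which provides the margin needed to absorb the moving-frame term $-\underline h'(t)\varphi_{c_0}'$ and the time-derivative of $\theta$. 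On the "inner" boundary $x=c_1t$ (or $x_0$), Lemma \ref{l2.2} (resp. direct comparison with a small steady state / the known convergence $(u,v)\to(U,V)$ away from the front in the Dirichlet case, or $\to(u^*,v^*)$ in the Neumann case) guarantees $u,v$ exceed the subsolution there; on the free-boundary side $x=\underline h(t)$ one checks $\underline h'(t)\le -\mu_1\underline u_x-\mu_2\underline v_x$ using $\underline h'(t)=c_0-K_2\sigma e^{-\sigma t}\le c_0$ and the same relation $\mu_1\varphi_{c_0}'(0)+\mu_2\psi_{c_0}'(0)=c_0$ together with the factor $(1-\theta)<1$; and one must ensure $h(t)>\underline h(t)$ at the initial time $T_0$, which follows because $h(t)/t\to c_0$ (from \eqref{1.5}) so $h(T_0)$ exceeds $\underline h(T_0)=c_0T_0-\underline h_0$ for $\underline h_0$ large. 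The comparison principle on the region $\{x\ge c_1t\}$ (with the inner boundary handled as a prescribed sub-barrier) then gives $h(t)\ge\underline h(t)\ge c_0t-C_2$. Combining the two bounds and enlarging $C$ to cover $t\in[0,T_0]$ via the Lipschitz bound on $h$ finishes the proof; the Neumann case $\mathbb{B}[w]=w_x$ is strictly easier since $(u,v)\to(u^*,v^*)$ globally, so Lemma \ref{l2.2} is not needed and the subsolution may be taken on all of $[0,\underline h(t)]$ with the $x=0$ boundary condition $\underline u_x(t,0)\ge 0$ verified from monotonicity of $\varphi_{c_0}$.
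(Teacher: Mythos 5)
Your lower bound in the Neumann case has a genuine sign error that the paper's construction is specifically designed to avoid. You propose the subsolution $\underline u(t,x)=(1-\theta(t))\varphi_{c_0}(\underline h(t)-x)$ on all of $[0,\underline h(t)]$ and assert ``$\underline u_x(t,0)\ge 0$ verified from monotonicity of $\varphi_{c_0}$.'' But $\partial_x\big[\varphi_{c_0}(\underline h(t)-x)\big]=-\varphi_{c_0}'(\underline h(t)-x)$, so $\underline u_x(t,0)=-(1-\theta(t))\varphi_{c_0}'(\underline h(t))<0$, which is the \emph{wrong} inequality for a lower solution under a homogeneous Neumann condition at $x=0$: one needs $\underline u_x(t,0)\ge 0$ so that the true solution with $u_x(t,0)=0$ cannot be pushed below $\underline u$ there. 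The paper (Lemma~3.3) resolves this by the reflection ansatz
\[
\underline u(t,x)=(1-\varepsilon e^{-\delta(t-T)})\big[\varphi_{c_0}(\underline h(t)-x)+\varphi_{c_0}(\underline h(t)+x)-\varphi_{c_0}(2\underline h(t))\big],
\]
which gives $\underline u_x(t,0)=0$ exactly; the price is a more delicate verification of the differential inequalities, which is where Lemma~\ref{l2.3} and the exponential decay of $\varphi_{c_0}',\psi_{c_0}'$ from Lemma~\ref{l2.1} are used. Your proposal skips the reflection entirely, and without it the comparison fails at $x=0$. Relatedly, Lemma~\ref{l2.3} is tailored to the symmetric structure $(1-\varepsilon)[F(\mathbf{u})+F(\mathbf{v})]-F((1-\varepsilon)(\mathbf{u}+\mathbf{v}-\mathbf{u}^*))$, which only arises from this reflection; your suggestion to invoke Lemma~\ref{l2.3} in the Dirichlet lower bound (where the paper instead uses direct estimates on $\big(H(z)/z\big)'$ and $\big(G(z)/z\big)'$) is a mismatch, though harmless in spirit.

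For the upper bound, your additive ansatz $\bar u=\varphi_{c_0}(\bar h(t)-x)+\eta_1(t)$ genuinely differs from the paper's multiplicative one $\bar u=(1+Ke^{-\delta(t-T)})\varphi_{c_0}(\bar h(t)-x)$, and this matters. With the additive correction the residual contains $-\big[H(\psi_{c_0}(s)+\eta_2)-H(\psi_{c_0}(s))\big]\ge -H'(\psi_{c_0}(s))\eta_2$, and near the front $s\to 0^+$ one has $H'(\psi_{c_0}(s))\to H'(0)$, not $H'(v^*)$. Since $\mathcal R_0>1$ forces $H'(0)G'(0)>ab$, the cone $H'(0)/a<M_1/M_2<b/G'(0)$ is \emph{empty}, so the cone you quote, $H'(v^*)/a<M_1/M_2<b/G'(u^*)$, does not close the estimate near $s=0$; there you must lean entirely on the moving-frame term $(\bar h'-c_0)\varphi_{c_0}'(s)$ being bounded below, with a careful two-region split and a large $K_1$. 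This can in principle be made to work, but your write-up does not carry it out, whereas the multiplicative factor sidesteps the whole issue because $(1+\kappa)H(z)\ge H((1+\kappa)z)$ holds automatically from concavity of $H$ with $H(0)=0$, giving a nonnegative nonlinear contribution everywhere. Note also that the paper's upper bound (Lemma~3.1) holds with \emph{no} extra hypothesis; the assumption $(d_1,a,H'(v^*))=(d_2,b,G'(u^*))$ is invoked only for the Dirichlet lower bound, so your appeal to it in the upper-bound construction is unnecessary (and the ``spectral gap'' you attribute to it is not what that hypothesis provides). Your Dirichlet lower bound, in contrast, is essentially the paper's: multiplicative defect $(1-\theta(t))\varphi_{c_0}(\underline h(t)-x)$ on $\{x\ge ct\}$, with the inner boundary controlled by Lemma~\ref{l2.2}.
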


This proposition will be proved by the following three lemmas. We first give an upper bound.

\begin{lem}\label{l3.1}Let $(u,v,h)$ be the unique solution of \eqref{1.2}. Then there is a $C>0$ such that
\[h(t)-c_0t\le C ~ ~ {\rm for ~ }t\ge0.\]
\end{lem}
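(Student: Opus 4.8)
The plan is to construct an explicit supersolution $(\bar u,\bar v,\bar h)$ of \eqref{1.2} and apply the comparison principle (see \cite[Lemmas 2.1 and 2.2]{WND1}). I would look for it in semi-wave form
\[
\bar h(t)=c_0t+\frac{c_0\eta_0}{\delta}\big(1-e^{-\delta t}\big)+L,\qquad
\bar u(t,x)=\big(1+\eta(t)\big)\varphi_{c_0}\big(\bar h(t)-x\big),\qquad
\bar v(t,x)=\big(1+\eta(t)\big)\psi_{c_0}\big(\bar h(t)-x\big),
\]
where $(\varphi_{c_0},\psi_{c_0})$ is the critical semi-wave of \eqref{1.4}, $\eta(t)=\eta_0e^{-\delta t}$, and $\eta_0,\delta,L>0$ are to be fixed. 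Since $\bar h'(t)-c_0=c_0\eta(t)$ and $\bar h(t)\le c_0t+c_0\eta_0/\delta+L$ for all $t\ge0$, once $(\bar u,\bar v,\bar h)$ is shown to be a supersolution dominating the initial data, the comparison principle gives $h(t)\le\bar h(t)\le c_0t+C$, which is the assertion. The multiplicative form is chosen so that $\bar u,\bar v$ vanish exactly at $x=\bar h(t)$ and so that, since $H(0)=G(0)=0$, the concavity defect it creates near the front is of higher order; an additive perturbation $\varphi_{c_0}+\epsilon$ would instead leave an uncontrolled linear term near the front.

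Substituting $(\bar u,\bar v)$ into the equations, writing $s=\bar h(t)-x$, and cancelling with the semi-wave identities $d_1\varphi_{c_0}''-c_0\varphi_{c_0}'-a\varphi_{c_0}+H(\psi_{c_0})=0$, $d_2\psi_{c_0}''-c_0\psi_{c_0}'-b\psi_{c_0}+G(\varphi_{c_0})=0$, together with $\eta'=-\delta\eta$, the parabolic operator applied to $\bar u$ collapses to
\[
\bar u_t-d_1\bar u_{xx}+a\bar u-H(\bar v)=\eta\big[(1+\eta)c_0\varphi_{c_0}'(s)-\delta\varphi_{c_0}(s)\big]+\big[(1+\eta)H(\psi_{c_0}(s))-H\big((1+\eta)\psi_{c_0}(s)\big)\big],
\]
and an analogous identity holds for $\bar v$. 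Strict concavity of $H$ with $H(0)=0$ (condition {\bf(H)}, via $H'$ decreasing) gives $(1+\eta)H(z)-H((1+\eta)z)\ge\eta\,g(z)$ for $z\ge0$, with $g(z):=H(z)-zH'(z)\ge0$ and $g(v^*)>0$ (mean value theorem plus strict monotonicity of $H'$). Hence the right-hand side is $\ge\eta[m(s)-\delta u^*]$ with $m(s):=c_0\varphi_{c_0}'(s)+g(\psi_{c_0}(s))$. Because $\varphi_{c_0}'>0$ and $g\ge0$, $m$ is continuous and positive on $[0,\infty)$, and $\psi_{c_0}(s)\to v^*$ forces $\liminf_{s\to\infty}m(s)\ge g(v^*)>0$; therefore $m_*:=\inf_{s\ge0}m(s)>0$. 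Choosing $\delta\le m_*/u^*$ — and the symmetric bound from the $\bar v$ equation — makes both parabolic differential inequalities hold. Crucially, this $\delta$ depends only on $(\varphi_{c_0},\psi_{c_0})$, i.e. on the data of \eqref{1.2} and $(H,G)$, and not on $\eta_0$.

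It remains to check the boundary conditions and fix $\eta_0,L$. The Stefan condition holds with equality, since $-\mu_1\bar u_x(t,\bar h(t))-\mu_2\bar v_x(t,\bar h(t))=(1+\eta(t))(\mu_1\varphi_{c_0}'(0)+\mu_2\psi_{c_0}'(0))=(1+\eta(t))c_0=\bar h'(t)$ (a strict inequality, if required, follows by slightly enlarging the coefficient of $1-e^{-\delta t}$ in $\bar h$, which only improves the parabolic inequalities). At $x=0$: if $\mathbb B[w]=w$ then $\bar u(t,0)=(1+\eta(t))\varphi_{c_0}(\bar h(t))>0$ and $\bar v(t,0)>0$ as needed; if $\mathbb B[w]=w_x$ I would pass to the even reflection across $x=0$, where $\bar u_x(t,0^+)=-(1+\eta(t))\varphi_{c_0}'(\bar h(t))<0$ produces a concave corner, so the reflected pair is still a supersolution of the symmetric free boundary problem on $(-\bar h(t),\bar h(t))$ to which the comparison applies. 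Finally, since $\varphi_{c_0},\psi_{c_0}$ are increasing with limits $u^*,v^*$, taking $L=h_0+1$ and then $\eta_0$ large enough that $(1+\eta_0)\varphi_{c_0}(1)\ge\|u_0\|_{L^\infty([0,h_0])}$ and $(1+\eta_0)\psi_{c_0}(1)\ge\|v_0\|_{L^\infty([0,h_0])}$ guarantees $\bar h(0)=L\ge h_0$, $\bar u(0,\cdot)\ge u_0$ and $\bar v(0,\cdot)\ge v_0$ on $[0,h_0]$. All hypotheses of the comparison principle then hold and yield $h(t)\le\bar h(t)\le c_0t+C$ for all $t\ge0$.

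The main obstacle is the parabolic differential inequality in the far-field region $s\to\infty$: there the time decay of the perturbation contributes the unfavorable term $-\delta\eta\varphi_{c_0}(s)\approx-\delta\eta u^*$, which must be absorbed by the concavity gain $(1+\eta)H(\psi_{c_0})-H((1+\eta)\psi_{c_0})\ge\eta g(\psi_{c_0})\to\eta g(v^*)>0$. What makes the construction work is that $m(s)=c_0\varphi_{c_0}'(s)+g(\psi_{c_0}(s))$ has a strictly positive infimum — the gradient term $\varphi_{c_0}'$ carries the region near the front while the concavity defect $g(\psi_{c_0})$ carries the region far from it — which lets the decay rate $\delta$ be chosen depending only on the data, decoupled from the (possibly large) amplitude $\eta_0$ needed to dominate the initial data.
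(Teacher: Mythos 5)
Your proof is correct and it follows the same basic strategy as the paper's: build a supersolution whose spatial profile is the semi-wave $(\varphi_{c_0},\psi_{c_0})$ multiplied by an exponentially decaying factor $1+\eta(t)$, with a front $\bar h(t)$ advancing at speed $c_0+O(e^{-\delta t})$, and let the comparison principle finish the job. The computational structure — cancel with the semi-wave ODEs, absorb the unfavorable term $-\delta\eta\varphi_{c_0}(s)$ by the gradient term near the front and by the concavity defect of $H$ far from the front — is also the paper's.

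What you do differently, and it is a genuine tightening, is in the concavity estimate and the parametrization of $\bar h$. You use the sharp bound $(1+\eta)H(z)-H((1+\eta)z)\ge\eta\,g(z)$ with $g(z)=H(z)-zH'(z)$, which is \emph{linear} in the (possibly large) amplitude $\eta$. The paper instead lower-bounds this quantity by replacing $K$ with $1$ and using the secant slope of $H(z)/z$, which produces a term proportional to $e^{-\delta(t-T)}$ rather than $Ke^{-\delta(t-T)}$; the resulting far-field constraint is $-\psi_{c_0}^2(1)B_{\psi_{c_0}}\ge\delta K u^*$, which couples the decay rate $\delta$ to the amplitude $K$. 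With your estimate, the needed condition becomes $\delta\le m_*/u^*$ with $m_*=\inf_{s\ge 0}\big(c_0\varphi_{c_0}'(s)+g(\psi_{c_0}(s))\big)>0$, independent of $\eta_0$. Together with your choice of the coefficient $c_0\eta_0/\delta$ in $\bar h$ (so the Stefan condition holds with equality, removing the paper's extra parameter $\sigma$ and the constraint $c_0K\le\sigma\delta$), this cleanly decouples the choice of $\delta$ (data-dependent only) from the choice of $\eta_0$ (initial-data-dependent only). The paper's three coupled constraints in \eqref{3.3} are replaced by a single one. In exchange you do not get the extra flexibility that the paper later exploits: in Step~2 of the proof of Theorem~\ref{t1.1} they re-use this supersolution with $T=t_n$, $X$ and $\sigma$ small, and $K$ small but fixed, and the free parameter $\sigma$ lets them make the overshoot $\sigma+X$ tiny while still satisfying \eqref{3.3}. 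Your single-parameter family with $\sigma=c_0\eta_0/\delta$ cannot push $\sigma$ and $\eta_0$ independently small, so as written it is not a drop-in replacement for that later step; but for Lemma~\ref{l3.1} itself your argument is complete.

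Two small remarks. First, the reflection you invoke for $\mathbb{B}[w]=w_x$ is unnecessary: the comparison principle for this problem (and the paper's condition in \eqref{3.1}) only asks $\bar u_x(t,0)\le 0$ and $\bar v_x(t,0)\le 0$ for an upper solution, which your profile satisfies directly since $\bar u_x(t,0)=-(1+\eta(t))\varphi_{c_0}'(\bar h(t))<0$. Second, your claim $\liminf_{s\to\infty}m(s)\ge g(v^*)$ is in fact an equality $\lim_{s\to\infty}m(s)=g(v^*)$, since $\varphi_{c_0}'(s)\to0$ and $\psi_{c_0}(s)\to v^*$; either way the conclusion $m_*>0$ stands.
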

\begin{proof}We will prove this assertion by constructing a suitably upper solution. For any given $T>0$ and $X>0$, we define
\bess
&\bar{h}(t)=c_0(t-T)+\sigma(1-e^{-\delta(t-T)})+h(T)+X,\\
&\bar{u}(t,x)=(1+Ke^{-\delta(t-T)})\varphi_{c_0}(\bar{h}(t)-x), ~ ~ \bar{v}(t,x)=(1+Ke^{-\delta(t-T)})\psi_{c_0}(\bar{h}(t)-x),
\eess
where positive constants $\sigma$, $\delta$ and $K\ge1$ are determined later.

Next we show that there are suitable $\sigma$, $\delta$ and $K$ such that for any $T>0$ and $X>0$, $(\bar{u},\bar{h})$ satisfies
\bes\label{3.1}\begin{cases}
\bar u_t\ge d_1\bar u_{xx}-a\bar u+H(\bar v), &t>T,~x\in(0,\bar h(t)),\\
 \bar v_t\ge d_2\bar v_{xx}-b\bar v+G(\bar u), &t>T,~x\in(0,\bar h(t)),\\
\bar u(t,\bar h(t))\ge0, ~ \bar v(t,\bar h(t))\ge0, \; &t>T,\\
\bar u(t,0)\ge 0, ~ \bar v(t,0)\ge 0, \; &t>T, ~ {\rm if ~ }\mathbb{B}[w]=w,\\
\bar u_x(t,0)\le 0, ~ \bar v_x(t,0)\le 0, \; &t>T, ~ {\rm if ~ }\mathbb{B}[w]=w_x,\\
\bar h'(t)\ge-\mu_1 \bar u_x(t,\bar h(t))-\mu_2\bar v_x(t,\bar h(t)), & t>T,\\
\bar h(T)\ge h(T), ~ \bar u(T,x)\ge u(T,x), ~ \bar v(T,x)\ge v(T,x),&0\le x\le h(T).
 \end{cases}
 \ees
 Once we have proved \eqref{3.1}, using a comparison argument yields $\bar{h}(t)\ge h(t)$ and $(\bar{u}(t,x),\bar{v}(t,x))\ge(u(t,x),v(t,x))$ for $t\ge T$ and $x\in[0,h(t)]$, which clearly implies our desired result.

 Now we prove \eqref{3.1}. By the definition of $(\bar{u},\bar{h})$, we easily see that $\bar{u}(t,\bar h(t))=\bar{v}(t,\bar{h}(t))=0$, $\bar u(t,0)\ge0$, $\bar{u}(t,0)\ge0$, $\bar{u}_x(t,0)<0$, $\bar{v}_x(t,0)<0$ and $\bar{h}(T)>h(T)$. Direct computations show
 \bess
-\mu_1 \bar u_x(t,\bar h(t))-\mu_2\bar v_x(t,\bar h(t))&=&(1+Ke^{-\delta(t-T)})(\mu_1\varphi_{c_0}'(0)+\mu_2\psi_{c_0}'(0))\\
&=&c_0(1+Ke^{-\delta(t-T)})\le c_0+\sigma\delta e^{-\delta(t-T)}=\bar{h}'(t)
 \eess
 provided that $c_0K\le \sigma\delta$. Moreover, for any $X>0$, we can let $K$ be large enough such that for $x\in[0,h(T)]$, we have
 \bes\label{3.2}
 \bar{u}(T,x)\ge(1+K)\varphi_{c_0}(X)\ge C\ge u(T,x) ~ {\rm and ~ }\bar{v}(T,x)\ge(1+K)\psi_{c_0}(X)\ge C\ge v(T,x).
 \ees
 Hence it remains to show that the first two inequalities are valid. For convenience, denote $s=\bar{h}(t)-x$. Straightforward calculations yield
 \bess
 &&\bar{u}_t-d_1\bar{u}_{xx}+a\bar{u}-H(\bar{v})\\
 &&=-\delta Ke^{-\delta(t-T)}\varphi_{c_0}(s)+(c_0+\sigma \delta e^{-\delta(t-T)})(1+Ke^{-\delta(t-T)})\varphi_{c_0}'(s)-d_1(1+Ke^{-\delta(t-T)})\varphi_{c_0}''(s)\\
 && \;\;\;+a(1+Ke^{-\delta(t-T)})\varphi_{c_0}(s)-H((1+Ke^{-\delta(t-T)})\psi_{c_0}(s))\\
 &&=-\delta Ke^{-\delta(t-T)}\varphi_{c_0}(s)+\sigma\delta e^{-\delta(t-T)}(1+Ke^{-\delta(t-T)})\varphi_{c_0}'(s)\\
 &&\;\;\;+(1+Ke^{-\delta(t-T)})[c_0\varphi_{c_0}'(s)-d_1\varphi_{c_0}''(s)+a\varphi_{c_0}(s)]-H((1+Ke^{-\delta(t-T)})\psi_{c_0}(s))\\
 &&=-\delta Ke^{-\delta(t-T)}\varphi_{c_0}(s)+\sigma\delta e^{-\delta(t-T)}(1+Ke^{-\delta(t-T)})\varphi_{c_0}'(s)\\
 &&\;\;\;+(1+Ke^{-\delta(t-T)})H(\psi_{c_0}(s))-H((1+Ke^{-\delta(t-T)})\psi_{c_0}(s)).
 \eess
 For $s\in[0,1]$, we have
 \bess
 &&-\delta Ke^{-\delta(t-T)}\varphi_{c_0}(s)+\sigma\delta e^{-\delta(t-T)}(1+Ke^{-\delta(t-T)})\varphi_{c_0}'(s)\\
 &&+(1+Ke^{-\delta(t-T)})H(\psi_{c_0}(s))-H((1+Ke^{-\delta(t-T)})\psi_{c_0}(s))\\
 &&\ge e^{-\delta(t-T)}[-\delta Ku^*+\sigma\delta \min_{s\in[0,1]}\varphi_{c_0}'(s)]\ge0,
 \eess
 if $\sigma\min_{s\in[0,1]}\varphi_{c_0}'(s)\ge Ku^*$.

Denote
\[B_{\psi_{c_0}}=\max_{z\in[\psi_{c_0}(1),2v^*]}\bigg(\frac{H(z)}{z}\bigg)'\]
Due to condition {\bf (H)}, we know $B_{\psi_{c_0}}<0$. For $s\ge1$, we have
\bess
&&-\delta Ke^{-\delta(t-T)}\varphi_{c_0}(s)+\sigma\delta e^{-\delta(t-T)}(1+Ke^{-\delta(t-T)})\varphi_{c_0}'(s)\\
 &&+(1+Ke^{-\delta(t-T)})H(\psi_{c_0}(s))-H((1+Ke^{-\delta(t-T)})\psi_{c_0}(s))\\
 &&\ge-\delta Ke^{-\delta(t-T)}\varphi_{c_0}(s)+(1+Ke^{-\delta(t-T)})\psi_{c_0}(s)\left[\frac{H(\psi_{c_0}(s))}{\psi_{c_0}(s)}-\frac{H((1+Ke^{-\delta(t-T)})\psi_{c_0}(s))}{(1+Ke^{-\delta(t-T)})\psi_{c_0}(s)}\right]\\
 &&\ge-\delta Ke^{-\delta(t-T)}u^*+(1+Ke^{-\delta(t-T)})\psi_{c_0}(s)\left[\frac{H(\psi_{c_0}(s))}{\psi_{c_0}(s)}-\frac{H((1+e^{-\delta(t-T)})\psi_{c_0}(s))}{(1+e^{-\delta(t-T)})\psi_{c_0}(s)}\right]\\
 &&\ge-\delta Ke^{-\delta(t-T)}u^*+(1+Ke^{-\delta(t-T)})\psi_{c_0}^2(s)(-B_{\psi_{c_0}})e^{-\delta(t-T)}\\
 &&\ge e^{-\delta(t-T)}(-\delta Ku^*+\psi_{c_0}^2(1)(-B_{\psi_{c_0}}))\ge0
\eess
provided that $-\psi_{c_0}^2(1)B_{\psi_{c_0}}\ge \delta Ku^*$. Thus the first inequality of \eqref{3.1} holds if $\sigma\min_{s\in[0,1]}\varphi_{c_0}'(s)\ge Ku^*$ and $-\psi_{c_0}^2(1)B_{\psi_{c_0}}\ge \delta Ku^*$.

Analogously, we can verify that the second inequality of \eqref{3.1} is valid if $\sigma\min_{s\in[0,1]}\psi_{c_0}'(s)\ge Kv^*$ and $-\varphi_{c_0}^2(1)B_{\varphi_{c_0}}\ge \delta Kv^*$ where
\[B_{\varphi_{c_0}}=\max_{z\in[\varphi_{c_0}(1),2u^*]}\bigg(\frac{G(z)}{z}\bigg)'.\]

In a word, we know that \eqref{3.1} holds for any $T>0$ and $X>0$ if $K\ge1$ is large enough such that \eqref{3.2} is valid, and
\bes\label{3.3}\begin{cases}
c_0K\le \sigma\delta, ~ ~ \sigma\dd\min_{s\in[0,1]}\varphi_{c_0}'(s)\ge Ku^*, ~ ~ -\psi_{c_0}^2(1)B_{\psi_{c_0}}\ge \delta Ku^*,\\
 \sigma\dd\min_{s\in[0,1]}\psi_{c_0}'(s)\ge Kv^*, ~ ~ -\varphi_{c_0}^2(1)B_{\varphi_{c_0}}\ge \delta Kv^*,
 \end{cases}
\ees
which can be guaranteed by choosing $K\ge1$ sufficiently large, $\delta$ small enough and $\sigma$ suitably large. The proof is complete.
\end{proof}

Then we derive a lower bound of $h(t)-c_0t$ when $\mathbb{B}[w]=w$.

\begin{lem}\label{l3.2}Let $(u,v,h)$ be the unique solution of \eqref{1.2} with $\mathbb{B}[w]=w$. Suppose $(d_1,a,H'(v^*))=(d_2,b,G'(u^*))$. Then there exists a positive constant $C$ such that
\[h(t)-c_0t\ge -C ~ ~ {\rm for ~ }t\ge0.\]
\end{lem}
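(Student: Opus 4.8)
The plan is to mirror the upper-bound construction of Lemma \ref{l3.1} by building a lower solution of the form
\[
\ud h(t)=c_0(t-T)-\sigma\big(1-e^{-\delta(t-T)}\big)+h(T)-X,\qquad
\ud u(t,x)=\big(1-Ke^{-\delta(t-T)}\big)\varphi_{c_0}\big(\ud h(t)-x\big)^+,
\]
and similarly for $\ud v$ with $\psi_{c_0}$, where $\sigma,\delta>0$ and $K\ge 1$ are to be chosen, and $(\cdot)^+$ denotes the positive part (so that $\ud u\equiv 0$ for $x\ge \ud h(t)$). Here the key structural difference from Lemma \ref{l3.1} is that $\ud h(t)$ lags behind $c_0(t-T)$ and the multiplicative factor is $1-Ke^{-\delta(t-T)}<1$, so that $\ud u,\ud v$ undershoot the true solution. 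To run the comparison I must check: (i) the two differential inequalities with the reversed sign (i.e. $\ud u_t\le d_1\ud u_{xx}-a\ud u+H(\ud v)$ and likewise for $\ud v$) on the region $0<x<\ud h(t)$; (ii) the Stefan condition $\ud h'(t)\le-\mu_1\ud u_x(t,\ud h(t))-\mu_2\ud v_x(t,\ud h(t))$, which follows because $\ud h'(t)=c_0-\sigma\delta e^{-\delta(t-T)}$ while the right side equals $(1-Ke^{-\delta(t-T)})c_0$, so one needs $c_0K\le\sigma\delta$; (iii) the fixed-boundary condition at $x=0$, where $\ud u(t,0)=(1-Ke^{-\delta(t-T)})\varphi_{c_0}(\ud h(t))$ which, although it may be negative for small $t-T$, is harmless after we truncate or after we note $\ud u(t,0)\le 0=u(t,0)$ is exactly what we want for a lower solution under the Dirichlet condition; and (iv) the initial ordering $\ud h(T)\le h(T)$ (automatic since $X>0$) and $\ud u(T,x)\le u(T,x)$, $\ud v(T,x)\le v(T,x)$ on $[0,h(T)]$ — and it is precisely \emph{this} last ordering that forces the use of Lemma \ref{l2.2}.

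The crux is therefore the initial comparison at time $T$. Unlike the double-free-boundary case, $(u,v)$ does not converge to a constant, so one cannot simply bound $u(T,\cdot)$ from below by $u^*$ times a cutoff; this is exactly the difficulty flagged in Remark \ref{r1.1}. Here is where the extra hypothesis $(d_1,a,H'(v^*))=(d_2,b,G'(u^*))$ enters through Lemma \ref{l2.2}: fixing $0<c_1<c_0$, Lemma \ref{l2.2} gives $u(t,x),v(t,x)\ge u^*-Me^{-\delta' t}$, $v^*-Me^{-\delta' t}$ on $x\in[c_1 t, c_2 t]$ for $t\ge T$ (with $c_1<c_2<c_0$). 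On the complementary strip $x\in[0,c_1 t]$ near the fixed boundary, the solution $(u,v)$ is bounded below away from zero on compact sets and, more importantly, one uses that $\varphi_{c_0}(\ud h(t)-x)$ with $x\le c_1 t$ and $\ud h(t)\approx c_0 t$ means $\ud h(t)-x\ge(c_0-c_1)t\to\infty$, so $\varphi_{c_0}(\ud h(t)-x)\to u^*$ uniformly there; combining with Lemma \ref{l2.2} we get, for $K$ large and $T$ large, $(1-Ke^{-\delta(t-T)})\varphi_{c_0}(\ud h(t)-x)\le u(t,x)$ on the whole region $[0,\ud h(t)]$ at time $t=T$ — the factor $1-K$ being very negative makes $\ud u(T,x)\le 0\le u(T,x)$ wherever $\varphi_{c_0}$ has not yet reached near $u^*$, while on the region where $\varphi_{c_0}\approx u^*$ we invoke Lemma \ref{l2.2}. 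One must be careful to pick $T$ large enough that $\ud h(T)\le h(T)$ holds with room to spare and that Lemma \ref{l2.2}'s conclusion covers $x$ up to $\ud h(T)\le c_0 T$.

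For the differential inequalities the computation is the sign-reversed analogue of Lemma \ref{l3.1}: writing $s=\ud h(t)-x$ and using $c_0\varphi_{c_0}'(s)-d_1\varphi_{c_0}''(s)+a\varphi_{c_0}(s)=H(\psi_{c_0}(s))$, one reduces $\ud u_t-d_1\ud u_{xx}+a\ud u-H(\ud v)$ to
\[
\delta K e^{-\delta(t-T)}\varphi_{c_0}(s)-\sigma\delta e^{-\delta(t-T)}\big(1-Ke^{-\delta(t-T)}\big)\varphi_{c_0}'(s)+\big(1-Ke^{-\delta(t-T)}\big)H(\psi_{c_0}(s))-H\big((1-Ke^{-\delta(t-T)})\psi_{c_0}(s)\big),
\]
which must be $\le 0$. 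For $s\in[0,1]$ the dominant term is $-\sigma\delta e^{-\delta(t-T)}\min_{[0,1]}\varphi_{c_0}'$, so $\sigma\min_{[0,1]}\varphi_{c_0}'\ge Ku^*$ suffices; for $s\ge 1$ one uses concavity of $H$ in the form $(H(z)/z)'\le B_{\psi_{c_0}}<0$ to extract a factor $\psi_{c_0}^2(s)(-B_{\psi_{c_0}})e^{-\delta(t-T)}$ with the correct (now favourable) sign, giving a condition like $-\psi_{c_0}^2(1)B_{\psi_{c_0}}\ge\delta K u^*$, exactly as in \eqref{3.3} but with roles of the two bounds adjusted; the second inequality is symmetric with $G,\varphi_{c_0},v^*$. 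Choosing first $\delta$ small, then $\sigma$ large (to get $\sigma\min\varphi_{c_0}'\ge Ku^*$, $\sigma\min\psi_{c_0}'\ge Kv^*$ and $c_0 K\le\sigma\delta$), then $T$ large and $X=\sigma$, all constraints are met, the comparison principle yields $h(t)\ge\ud h(t)=c_0 t-(\text{const})$ for $t\ge T$, and extending to $t\in[0,T]$ by boundedness of $h$ gives the claim. I expect step (iv) — the initial lower bound at $t=T$ on the near-boundary strip $x\in[0,c_1 t]$ where Lemma \ref{l2.2} does not directly apply — to be the main obstacle, requiring one to combine the uniform convergence $\varphi_{c_0}(\ud h(t)-x)\to u^*$ on that strip with the largeness of $K$ to force $\ud u(T,\cdot)\le 0$ there.
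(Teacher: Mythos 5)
Your differential-inequality computations and the choice of parameters $\sigma,\delta,K$ satisfying the sign-reversed analogue of \eqref{3.3} are essentially correct, and you correctly identify Lemma~\ref{l2.2} as the key extra ingredient. But there is a genuine gap at step (iii), the fixed boundary $x=0$. You need the ordering $\ud u(t,0)\le u(t,0)=0$ for \emph{all} $t>T$, not just at $t=T$. Since $\ud u(t,0)=(1-Ke^{-\delta(t-T)})\varphi_{c_0}(\ud h(t))$ and $\varphi_{c_0}(\ud h(t))\to u^*>0$, the prefactor becomes positive once $t-T>\delta^{-1}\log K$, and then $\ud u(t,0)>0=u(t,0)$: the boundary comparison is violated. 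The $(\cdot)^+$ truncation only helps at the free boundary $x=\ud h(t)$, not at $x=0$, so the comparison principle simply does not apply on $(0,\ud h(t))$. Your discussion of the ``near-boundary strip $[0,c_1t]$'' concentrates on the initial time $t=T$, where the ordering is trivial ($1-K\le 0$), and misses that the real obstruction is the left boundary condition at later times.

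The paper sidesteps this by running the comparison on the \emph{moving} domain $x\in(ct,\ud h(t))$ for a fixed $c\in(0,c_0)$, rather than on $(0,\ud h(t))$. With that choice the left boundary is $x=ct$, where $u(t,ct)$ converges to $u^*$ (since $ct$ lies in the spreading region), and Lemma~\ref{l2.2} supplies the exponential rate $u(t,ct)\ge u^*-Me^{-\delta' t}$ which, after matching the decay rate of the prefactor, gives $\ud u(t,ct)\le u(t,ct)$ for all $t\ge T$. This also explains why the paper uses a small factor $1-\ep e^{-\sigma(t-T)}$ (with $\ep$ small) rather than $1-Ke^{-\delta(t-T)}$ with $K$ large: the initial ordering at $t=T$ is obtained from spreading (so $u(T,x)\approx u^*$ on $[cT,\ud h(T)]$), not by forcing the lower solution to be nonpositive. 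To repair your argument you should replace the domain $(0,\ud h(t))$ by $(ct,\ud h(t))$ and replace the $x=0$ boundary check by verifying $\ud u(t,ct)\le u(t,ct)$, $\ud v(t,ct)\le v(t,ct)$ via Lemma~\ref{l2.2}; the rest of your PDE inequality computation then goes through.
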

\begin{proof}
For any given $c\in(0,c_0)$, we define
\bess
&\ud{h}(t)=c_0(t-T)+c(T+1)-(1-e^{-\sigma(t-T)}), ~ ~ \ud u(t,x)=(1-\ep e^{-\sigma(t-T)})\varphi_{c_0}(\ud h(t)-x),\\
&\ud v(t,x)=(1-\ep e^{-\sigma(t-T)})\psi_{c_0}(\ud h(t)-x),
\eess
where positive constants $\sigma<c_0$, $T$ and $\ep$ are determined later.

We next show that by choosing $(\sigma,T, \ep)$ suitably, there holds:
\bes\begin{cases}\label{3.4}
\ud u_t\le d_1\ud u_{xx}-a\ud u+H(\ud v), &t>T,~x\in(ct,\ud h(t)),\\
 \ud v_t\le d_2\ud v_{xx}-b\ud v+G(\ud u), &t>T,~x\in(ct,\ud h(t)),\\
\ud u(t,ct)\le u(t,ct), ~ \ud v(t,ct)\le v(t,ct), &t>T,\\
\ud u(t,\ud h(t))\le0, ~ \ud v(t,\ud h(t))\le0, &t>T,\\
\ud h'(t)\le-\mu_1 \ud u_x(t,\ud h(t))-\mu_2\ud v_x(t,\ud h(t)), & t>T,\\
\ud h(T)\le h(T), ~ \ud u(T,x)\le u(T,x), ~ \ud v(T,x)\le v(T,x),&cT\le x\le \ud h(T).
 \end{cases}
 \ees
Once we have done that, by a comparison method we have $h(t)\ge\ud h(t)$ and $(u(t,x),v(t,x))\ge(\ud u(t,x),\ud v(t,x))$ for $t\ge T$ and $x\in[ct,\ud h(t)]$, which clearly indicates the result as wanted.

Let us now begin with proving \eqref{3.4}. Simple computations shows that if $c_0\ep\le\sigma$, then
\bess
-\mu_1 \ud u_x(t,\ud h(t))-\mu_2\ud v_x(t,\ud h(t))=(1-\ep e^{-\sigma(t-T)})c_0\ge c_0-\delta e^{-\sigma(t-T)}=\ud h'(t).
\eess
Denote $s=\ud h(t)-x$. Direct calculations yield
\bess
&&\ud u_t-d_1\ud u_{xx}+a\ud u-H(\ud v)\\
&&=\ep\sigma e^{-\sigma(t-T)}\varphi_{c_0}(s)-\sigma e^{-\sigma(t-T)}(1-\ep e^{-\sigma(t-T)})\varphi_{c_0}'(s)\\
&&\;\;\;+(1-\ep e^{-\sigma(t-T)})H(\psi_{c_0}(s))-H((1-\ep e^{-\sigma(t-T)})\psi_{c_0}(s)).
\eess
For $s\in[0,1]$, we have that if $\ep\le\frac{\min_{s\in[0,1]}\varphi_{c_0}'(s)}{2\varphi_{c_0}(1)}$, then
\bess
&&\ep\sigma e^{-\sigma(t-T)}\varphi_{c_0}(s)-\delta e^{-\sigma(t-T)}(1-\ep e^{-\sigma(t-T)})\varphi_{c_0}'(s)\\
&&\;\;\;+(1-\ep e^{-\sigma(t-T)})H(\psi_{c_0}(s))-H((1-\ep e^{-\sigma(t-T)})\psi_{c_0}(s))\\
&&\le\ep\sigma e^{-\sigma(t-T)}\varphi_{c_0}(s)-\sigma e^{-\sigma(t-T)}(1-\ep e^{-\sigma(t-T)})\varphi_{c_0}'(s)\\
&&=\sigma e^{-\sigma(t-T)}[\ep\varphi_{c_0}(s)-(1-\ep e^{-\sigma(t-T)})\varphi_{c_0}'(s)]\\
&&\le\sigma e^{-\sigma(t-T)}(\ep\varphi_{c_0}(1)-\frac{1}{2}\min_{s\in[0,1]}\varphi_{c_0}'(s))\le0.
\eess

Define
\[C_{\psi_{c_0}}=\max_{z\in[\psi_{c_0}(1)/2,v^*]}\bigg(\frac{H(z)}{z}\bigg)'.\]
Clearly, by {\bf (H)}, $C_{\psi_{c_0}}<0$. Straightforward computations yield that if $\sigma\le\frac{-\psi_{c_0}^2(1)C_{\psi_{c_0}}}{2v^*}$, then
\bess
&&\ep\sigma e^{-\sigma(t-T)}\varphi_{c_0}(s)-\sigma e^{-\sigma(t-T)}(1-\ep e^{-\sigma(t-T)})\varphi_{c_0}'(s)\\
&&\;\;\;+(1-\ep e^{-\sigma(t-T)})H(\psi_{c_0}(s))-H((1-\ep e^{-\sigma(t-T)})\psi_{c_0}(s))\\
&&\le\ep\sigma e^{-\sigma(t-T)}\varphi_{c_0}(s)+(1-\ep e^{-\sigma(t-T)})H(\psi_{c_0}(s))-H((1-\ep e^{-\sigma(t-T)})\psi_{c_0}(s))\\
&&\le\ep\sigma e^{-\sigma(t-T)}\varphi_{c_0}(s)+(1-\ep e^{-\sigma(t-T)})\ep e^{-\sigma(t-T)}\psi_{c_0}^2(s)C_{\psi_{c_0}}\\
&&\le\ep e^{-\sigma(t-T)}(\sigma v^*+\frac{1}{2}\psi_{c_0}^2(1)C_{\psi_{c_0}})\le0.
\eess
Similarly, we can show the second inequality of \eqref{3.4} holds if
\[\ep\le\frac{\min_{s\in[0,1]}\psi_{c_0}'(s)}{2\psi_{c_0}(1)} ~ ~ {\rm and }~ ~ \sigma\le\frac{-\varphi_{c_0}^2(1)C_{\varphi_{c_0}}}{2u^*},\]
 where
\[C_{\varphi_{c_0}}=\max_{z\in[\varphi_{c_0}(1)/2,u^*]}\bigg(\frac{G(z)}{z}\bigg)'.\]

Moreover, by Lemma \ref{l2.2}, for any given $c\in(0,c_0)$, there are $M$, $\delta$ and $T_1$ such that
\[(u(t,ct),v(t,ct))\ge(u^*-Me^{-\delta t},v^*-Me^{-\delta t}) ~ ~  {\rm for}~t\ge T_1.\]
 Let $\sigma\le \delta$, $T>T_1$, $\min\{u^*,v^*\}\ep e^{\sigma T}\ge M$.
Then we have
\bess
&&\underline{u}(t,ct)\le u^*-u^*\ep e^{\sigma T}e^{-\delta t}\le u^*-Me^{-\delta t}\le u(t,ct),\\
&&\underline{v}(t,ct)\le v^*-v^*\ep e^{\sigma T}e^{-\delta t}\le v^*-Me^{-\delta t}\le v(t,ct).
\eess
Since spreading happens, in light of \eqref{1.5} we can further choose $T$ large enough if necessary such that $h(T)\ge c(T+1)=\underline{h}(T)$ and for $x\in[cT,\underline{h}(T)]$,
\bess
\underline{u}(T,x)\le u^*(1-\ep)\le u(T,x) ~ ~ {\rm and }~ ~ \underline{v}(T,x)\le v^*(1-\ep)\le v(T,x).
\eess
Thus \eqref{3.4} is proved. So the proof is finished.
\end{proof}

Now we turn to the lower bound of $h(t)-c_0t$ when $\mathbb{B}[w]=w_x$. Notice that here we do not need other extra assumptions on parameters of \eqref{1.2} and nonlinear term $(H,G)$.

\begin{lem}\label{l3.3}Let $(u,v,h)$ be the unique solution of \eqref{1.2} with $\mathbb{B}[w]=w_x$. Then there exists a positive constant $C$ such that
\[h(t)-c_0t\ge -C ~ ~ {\rm for ~ }t\ge0.\]
\end{lem}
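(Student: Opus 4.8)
The plan is to mimic the strategy of Lemma \ref{l3.2}, constructing a lower solution of the form
\bess
\ud h(t)=c_0(t-T)+c(T+1)-(1-e^{-\sigma(t-T)}), ~~ \ud u(t,x)=(1-\ep e^{-\sigma(t-T)})\varphi_{c_0}(\ud h(t)-x),
\eess
and similarly for $\ud v$, where $\sigma<c_0$, $T$ and $\ep$ are to be chosen. The crucial structural difference is that now $\mathbb{B}[w]=w_x$, so the fixed boundary $x=0$ carries a Neumann condition and the limiting profile in the bulk is the constant $(u^*,v^*)$ rather than $(U,V)$. Consequently I would set up the comparison on the full domain $x\in(0,\ud h(t))$ rather than on $(ct,\ud h(t))$, which removes the need for the interior lower estimate of Lemma \ref{l2.2} — and hence removes the extra hypothesis $(d_1,a,H'(v^*))=(d_2,b,G'(u^*))$.

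The key steps, in order: \emph{(i)} Verify the two differential inequalities $\ud u_t\le d_1\ud u_{xx}-a\ud u+H(\ud v)$ and the analogue for $\ud v$; this computation is \emph{identical} to the one in Lemma \ref{l3.2}, splitting into the ranges $s=\ud h(t)-x\in[0,1]$ and $s\ge1$, using $\min_{[0,1]}\varphi_{c_0}'$, $\min_{[0,1]}\psi_{c_0}'$ and the quantities $C_{\psi_{c_0}},C_{\varphi_{c_0}}$, and it imposes the same smallness constraints on $\ep$ and $\sigma$. \emph{(ii)} Check the free-boundary condition $\ud h'(t)\le-\mu_1\ud u_x(t,\ud h(t))-\mu_2\ud v_x(t,\ud h(t))$, which as before reduces to $c_0\ep\le\sigma$ since $\mu_1\varphi_{c_0}'(0)+\mu_2\psi_{c_0}'(0)=c_0$. \emph{(iii)} Check the new fixed-boundary condition at $x=0$: we need $\ud u_x(t,0)\ge0$ and $\ud v_x(t,0)\ge0$ (so that $\ud u$ is a genuine lower solution under the homogeneous Neumann condition $u_x(t,0)=0$). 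Since $\ud u_x(t,x)=-(1-\ep e^{-\sigma(t-T)})\varphi_{c_0}'(\ud h(t)-x)<0$, this \emph{fails} directly, so the naive ansatz must be modified.

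The main obstacle, therefore, is reconciling the monotone semi-wave profile (which is decreasing in $x$, hence has negative slope everywhere) with the Neumann condition at $x=0$. The standard fix — and the one I expect the authors use — is to exploit the \emph{reflection} symmetry of the Neumann problem: extend the comparison to a doubled domain, or equivalently observe that it suffices to have $\ud h(t)$ large enough that $\ud h(t)-x$ stays in a region where $\varphi_{c_0}'$ is exponentially small, so that $\ud u_x(t,0)=O(e^{-\alpha\,\ud h(t)})$ can be absorbed. More precisely, using Lemma \ref{l2.1} the slope $\varphi_{c_0}'(\ud h(t))+\psi_{c_0}'(\ud h(t))=O(e^{-\alpha c_0 t})$, and one can add a small correction term (e.g. replace $\varphi_{c_0}(s)$ by $\varphi_{c_0}(s)-\eta e^{-\sigma(t-T)}$ near $x=0$, or work with an even reflection of the profile about $x=0$) that makes the boundary slope nonnegative while perturbing the interior inequalities by a controllably small amount. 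Once the boundary conditions at both ends and the initial ordering at $t=T$ (guaranteed by \eqref{1.5}, since $(u,v)\to(u^*,v^*)$ locally uniformly and $h(T)\ge c(T+1)$ for $T$ large) are in place, the comparison principle gives $h(t)\ge\ud h(t)=c_0 t+O(1)$, completing the proof.
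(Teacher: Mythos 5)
Your diagnosis of the obstacle is correct: the naive ansatz $\ud u=(1-\ep e^{-\sigma(t-T)})\varphi_{c_0}(\ud h(t)-x)$ has $\ud u_x(t,0)<0$, which is the wrong sign for a lower solution at a left Neumann boundary, and among the fixes you float, the even reflection is the one the paper actually uses. Concretely the paper takes
\[
\underline{u}(t,x)=(1-\ep e^{-\delta(t-T)})\bigl[\varphi_{c_0}(\ud h(t)-x)+\varphi_{c_0}(\ud h(t)+x)-\varphi_{c_0}(2\ud h(t))\bigr],
\]
and similarly for $\ud v$, which is even in $x$ so that $\ud u_x(t,0)=0$ exactly and still vanishes at $x=\ud h(t)$. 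By contrast, the other two options you mention do not work: if $\ud u_x(t,0)=-\eta<0$, no matter how small $\eta$ is the Hopf-lemma step in the comparison argument fails, so an exponentially small boundary slope cannot simply be ``absorbed''; and subtracting an $x$-independent quantity $\eta e^{-\sigma(t-T)}$ leaves the slope at $x=0$ unchanged, so it does not repair the boundary condition at all.

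The more serious gap is your claim that, once the boundary is handled, the differential-inequality check is ``identical'' to Lemma~\ref{l3.2}. It is not. With the reflected profile, the term to be controlled has the form
\[
(1-\ep(t))\bigl[f_1(\varphi_{c_0}(\ud h-x),\psi_{c_0}(\ud h-x))+f_1(\varphi_{c_0}(\ud h+x),\psi_{c_0}(\ud h+x))\bigr]
-f_1\bigl((1-\ep(t))[\varphi_{c_0}(\ud h-x)+\varphi_{c_0}(\ud h+x)-u^*],\,\cdot\,\bigr),
\]
which mixes the two reflected copies of the profile in a way that the single-profile computation of Lemma~\ref{l3.2} never sees. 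The paper handles this with the superadditivity/concavity estimate of Lemma~\ref{l2.3}, split across the two regimes $x\in[0,\ud h-K_0]$ (where both copies are close to $(u^*,v^*)$ and Lemma~\ref{l2.3} applies directly, using Lemma~\ref{l2.1} to control cross terms) and $x\in[\ud h-K_0,\ud h]$ (where the $B(t,x)$ term coming from $\varphi_{c_0}'$ dominates). Moreover there is a preliminary step you omit entirely: it is not obvious that the reflected combination $\varphi_{c_0}(\ud h-x)+\varphi_{c_0}(\ud h+x)-\varphi_{c_0}(2\ud h)$ is nonnegative on $[0,\ud h]$, and the paper devotes Step~1 of the proof to establishing this (choosing $L$ large, using uniform continuity of $\partial_x f$ and the boundedness of $\varphi_{c_0}''$). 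Without those two ingredients — Lemma~\ref{l2.3} for the reaction terms and the nonnegativity of the lower solution — the comparison argument does not go through, so the proposal as written has a genuine gap even though the leading idea is the right one.
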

\begin{proof}Define
\bess
&&\underline{h}(t)=c_0(t-T)+L-(1-e^{-\delta(t-T)}), \\
&&\underline{u}(t,x)=(1-\ep e^{-\delta(t-T)})[\varphi_{c_0}(\ud h(t)-x)+\varphi_{c_0}(\ud h(t)+x)-\varphi_{c_0}(2\ud h)],\\
&&\underline{v}(t,x)=(1-\ep e^{-\delta(t-T)})[\psi_{c_0}(\ud h(t)-x)+\psi_{c_0}(\ud h(t)+x)-\psi_{c_0}(2\ud h)],
\eess
where positive constants $T$, $L$, $\delta<c_0$ and $\ep$ are determined later.

{\bf Step 1.} In this step, we show that by choosing $L$ to be sufficiently large, there holds
\[(\ud u(t,x),\ud v(t,x))\ge(0,0), ~ ~ \forall ~ t\ge T, ~ 0\le x \le \ud h(t).\]
For $\ud h\ge L>1$ and $x\in[0,\ud h]$, we denote
\[f(\ud h, x)=\varphi_{c_0}(\ud h-x)+\varphi_{c_0}(\ud h+x)-\varphi_{c_0}(2\ud h).\]
Obviously, the one-side partial derivative $\partial_xf(\ud h,\ud h)=-\varphi_{c_0}'(0)+\varphi_{c_0}'(2\ud h)$. Since $\varphi_{c_0}'(0)>0$ and $\varphi_{c_0}'(x)\to0$ as $x\to\yy$, there exists a $L_1\gg1$ such that $\partial_xf(\ud h,\ud h)<-\frac{\varphi_{c_0}'(0)}{2}$ when $L>L_1$. Notice that $\partial_{x\ud h} f(\ud h,x)=-\varphi_{c_0}''(\ud h-x)+\varphi_{c_0}''(\ud h+x)$ and $\partial_{xx}f(\ud h,x)=\varphi_{c_0}''(\ud h-x)+\varphi_{c_0}''(\ud h+x)$. Since $\varphi_{c_0}''(x)$ is bounded in $x\in[0,\yy)$, we know $\partial_{x\ud h}f$ and $\partial_{xx}f$ are bounded in $(\ud h,x)\in[L,\yy)\times[0,\ud h]$, which implies that $\partial_{x}f$ is uniformly continuous for $(\ud h,x)\in[1,\yy)\times[0,\ud h]$. Thus there exists a small $\eta>0$ such that $\partial_x f(\ud h,x)<\frac{-\varphi_{c_0}'(0)}{4}$ for $\ud h\ge L$ and $x\in[\ud h-\eta,\ud h]$. It is worthy mentioning that $\eta$ depends only on $\varphi_{c_0}$. Together with $f(\ud h,\ud h)=0$, we have $f(\ud h,x)\ge0$ for $\ud h\ge L$ and $x\in[\ud h-\eta,\ud h]$.

For $\ud h\ge L$ and $x\in[0,\ud h-\eta]$, we have $f(\ud h, x)\ge\varphi_{c_0}(\eta)+\varphi_{c_0}(\ud h)-\varphi_{c_0}(2\ud h)\ge0$
provided that $L$ is large enough, say $L>L_2\ge L_1$. Hence this step is finished.

{\bf Step 2.} In this step, we show that with the aid of suitable choices of $L$, $T$, $\ep$ and $\delta$, there holds
\bes\begin{cases}\label{3.5}
\ud u_t\le d_1\ud u_{xx}-a\ud u+H(\ud v), &t>T,~x\in(0,\ud h(t)),\\
 \ud v_t\le d_2\ud v_{xx}-b\ud v+G(\ud u), &t>T,~x\in(0,\ud h(t)),\\
\ud u_x(t,0)=\ud v_x(t,0)=\ud u(t,\ud h)=\ud v(t,\ud h)=0, &t>T,\\
\ud h'(t)\le-\mu_1 \ud u_x(t,\ud h(t))-\mu_2\ud v_x(t,\ud h(t)), & t>T,\\
\ud h(T)\le h(T), ~ \ud u(T,x)\le u(T,x), ~ \ud v(T,x)\le v(T,x),&0\le x\le \ud h(T).
 \end{cases}
 \ees
 Once it is proved, by a comparison argument, we directly obtain the desired result.

 Now let us prove \eqref{3.5}. The inequalities in the third line of \eqref{3.5} are obvious. By virtue of Lemma \ref{l2.1}, a straightforward computation yields
 \bess
 &&-\mu_1\ud u_x(t,\ud h(t))-\mu_2\ud v_x(t,\ud h(t))\\
 &&=c_0(1-\ep e^{-\delta(t-T)})-(1-\ep e^{-\delta(t-T)})(\mu_1\varphi_{c_0}'(2\ud h(t))+\mu_2\psi_{c_0}'(2\ud h(t)))\\
 &&\ge c_0-c_0\ep e^{-\delta(t-T)}-Ce^{-2\alpha \ud h(t)}\\
 &&\ge c_0-c_0\ep e^{-\delta(t-T)}-Ce^{2\alpha(1-L)}e^{-2\alpha c_0(t-T)}\\
 &&\ge c_0-(c_0\ep+Ce^{2\alpha(1-L)})e^{-\delta(t-T)}\\
 &&\ge c_0-\delta e^{-\delta(t-T)}=\ud h'(t)
\eess
provided that $\delta<2\alpha c_0$ and $c_0\ep+Ce^{2\alpha(1-L)}<\delta$, which are easily guaranteed by letting $\delta$, $\ep$ small enough and $L$ suitably large.
Thus the inequality in the fourth line of \eqref{3.5} holds.

Now we focus on the inequalities in the first two lines of \eqref{3.5}. Since the arguments used below are similar, we only prove the first one. To save spaces, we denote $\ep(t)=\ep e^{-\delta(t-T)}$. Direct computations yield
\bess
&&\ud u_t-d_1\ud u_{xx}-f_1(\ud u,\ud v)\\
&&=\delta \ep(t)\left[\varphi_{c_0}(\ud h(t)-x)+\varphi_{c_0}(\ud h(t)+x)-\varphi_{c_0}(2\ud h(t))\right]\\
&&~ ~ +(1-\ep(t))\left[f_1(\varphi_{c_0}(\ud h(t)-x),\psi_{c_0}(\ud h(t)-x))+f_1(\varphi_{c_0}(\ud h(t)+x),\psi_{c_0}(\ud h(t)+x))\right]-f_1(\ud u,\ud v)\\
&&~ ~ -(1-\ep (t))\delta\left[\varphi_{c_0}'(\ud h(t)-x)+\varphi_{c_0}'(\ud h(t)+x)\right]\\
&& ~ ~ -(1-\ep(t))(c_0-\delta e^{-\delta(t-T)})2\varphi_{c_0}'(2\ud h(t))
\eess
for $t>T$ and $x\in(0,\ud h(t))$. In view of the properties of the solution $(\varphi_{c_0},\psi_{c_0})$ of semi-wave problem \eqref{1.4}, we have
\[\delta \ep(t)\left[\varphi_{c_0}(\ud h(t)-x)+\varphi_{c_0}(\ud h(t)+x)-\varphi_{c_0}(2\ud h(t))\right]\le\delta\ep(t)u^*.\]
Due to our choices of $\delta$ and $\ep$, we see $-(1-\ep(t))(c_0-\delta e^{-\delta(t-T)})2\varphi_{c_0}'(2\ud h(t))\le0$.
Moreover, it follows from the assumption {\bf (H)} and Lemma \ref{l2.1} that
\bess
&&-f_1(\ud u,\ud v)\\
&&\le -f_1((1-\ep(t))[\varphi_{c_0}(\ud h-x)+\varphi_{c_0}(\ud h+x)-u^*], (1-\ep(t))[\psi_{c_0}(\ud h-x)+\psi_{c_0}(\ud h+x)-v^*])+C e^{-2\alpha \ud h},
\eess
where positive constant $C$ depends only on the parameters in \eqref{1.4} and nonlinear term $(H,G)$. Thus for $t>T$ and $x\in(0,\ud h(t))$, we obtain
\[\ud u_t-d_1\ud u_{xx}-f_1(\ud u,\ud v)\le \delta\ep(t)u^*+C e^{-2\alpha \ud h}+A(t,x)+B(t,x),\]
where
\bess &&A(t,x):=(1-\ep(t))\left[f_1(\varphi_{c_0}(\ud h(t)-x),\psi_{c_0}(\ud h(t)-x))+f_1(\varphi_{c_0}(\ud h(t)+x),\psi_{c_0}(\ud h(t)+x))\right]\\
&& ~ -f_1((1-\ep(t))[\varphi_{c_0}(\ud h(t)-x)+\varphi_{c_0}(\ud h(t)+x)-u^*], (1-\ep(t))[\psi_{c_0}(\ud h(t)-x)+\psi_{c_0}(\ud h(t)+x)-v^*])\\
&&B(t,x):=-(1-\ep (t))\delta\left[\varphi_{c_0}'(\ud h(t)-x)+\varphi_{c_0}'(\ud h(t)+x)\right].
\eess
For $\delta_0>0$ determined in Lemma \ref{l2.3}, we choose $K_0>0$ to be sufficiently large  such that
\[u^*-\varphi_{c_0}(K_0)\le\delta_0u^*{\rm ~ ~ and}~ ~ v^*-\psi_{c_0}(K_0)\le\delta_0v^*.\]
Then we let $L\gg K_0$ and estimate $\delta\ep(t)u^*+C e^{-2\alpha \ud h}+A(t,x)+B(t,x)$ in the following two cases:
\[x\in[0,\ud h(t)-K_0] ~ ~ {\rm and }~ ~ x\in[\ud h(t)-K_0,\ud h(t)].\]

First, for $t\ge T$ and $x\in[0,\ud h(t)-K_0]$, it is easy to see that
\[\varphi_{c_0}(\ud h(t)-x), ~ \varphi_{c_0}(\ud h(t)+x)\in[(1-\delta_0)u^*,u^*] {\rm ~ and ~ }\psi_{c_0}(\ud h(t)-x), ~ \psi_{c_0}(\ud h(t)+x)\in[(1-\delta_0)v^*,v^*].\]
Since $\ud h(t)+x\ge \ud h(t)$, it follows from Lemma \ref{l2.1} that
\bess
&&(u^*-\varphi_{c_0}(\ud h-x))(u^*-\varphi_{c_0}(\ud h+x))+(u^*-\varphi_{c_0}(\ud h-x))(v^*-\psi_{c_0}(\ud h+x))\\
&&~ ~ +(v^*-\psi_{c_0}(\ud h-x))(u^*-\varphi_{c_0}(\ud h+x))+(v^*-\psi_{c_0}(\ud h-x))(v^*-\psi_{c_0}(\ud h+x))\\
&&\le C\delta_0e^{-\alpha\ud h(t)}\le C\delta_0e^{-\alpha c_0(t-T))}e^{\alpha(1-L)}\le\delta_0\ep(t)
\eess
provided that $L$ is large enough such that $e^{\alpha(1-L)}\le \ep$. Hence we can apply Lemma \ref{l2.3} to derive that $A(t,x)\le -\sigma_0\ep(t)$ where $\sigma_0$ satisfies
\[-\sigma_0>\frac{1}{2}\max\{-au^*+H'(v^*)v^*, -bv^*+G'(u^*)\}.\]
Note that $B(t,x)\le0$ and $\delta<2\alpha c_0$. We have that for $t\ge T$ and $x\in[0,\ud h-K_0]$,
\bess
\delta\ep(t)u^*+C e^{-2\alpha \ud h}+A(t,x)+B(t,x)&\le& (\delta u^*-\sigma_0)\ep(t)+Ce^{2\alpha(1-L)}e^{-2\alpha c_0(t-T)}\\
&\le&\left[Ce^{2\alpha(1-L)}+(\delta u^*-\sigma_0)\ep\right]e^{-\delta(t-T)}<0,
\eess
if $Ce^{2\alpha(1-L)}+(\delta u^*-\sigma_0)\ep<0$ by letting $L$ large enough and $\delta$ suitably small.

For $t\ge T$ and $x\in[\ud h-K_0,\ud h]$, we easily derive that
\[B(t,x)\le -\frac{1}{2}\delta\min_{s\in[0,K_0]}\varphi_{c_0}'(s)e^{-\delta(t-T)},\]
where $\min_{s\in[0,K_0]}\varphi_{c_0}'(s)>0$.
Moreover, using Lemma \ref{l2.1} and arguing as in the proof of \cite[Lemma 2.9]{DN3}, we have $A(t,x)\le C(\ep(t)+e^{-\alpha\ud h(t)})$.
Thus for $t\ge T$ and $x\in[\ud h-K_0,\ud h]$, we can derive that
\bess
&&\delta\ep(t)u^*+C e^{-2\alpha \ud h}+A(t,x)+B(t,x)\\
&&\le\left[C e^{\alpha(1-L)}+C\ep-\frac{\delta}{2}\min_{s\in[0,K_0]}\varphi_{c_0}'(s)\right]e^{-\delta(t-T)}<0
\eess
provided that
\[C e^{\alpha(1-L)}+C\ep-\frac{\delta}{2}\min_{s\in[0,K_0]}\varphi_{c_0}'(s)<0.\]
Thus the first inequality of \eqref{3.5} holds with the above suitable choices of $L$, $\ep$ and $\delta$. Analogously, we can prove the second inequality of \eqref{3.5} is true if choosing $L$, $\ep$ and $\delta$ as above.

With $(L,\ep,\delta)$ determined as above, since spreading happens, it is easy to show that there exists a large $T$ such that $h(T)>L=\ud h(T)$ and $(u(T,x),v(T,x))\ge(\ud u(T,x),\ud v(T,x))$ for $x\in[0,\ud h(T)]$. So \eqref{3.5} holds, and the proof is finished.
\end{proof}

Clearly, Proposition \ref{p3.1} follows from Lemmas \ref{l3.1}-\ref{l3.3}.

\section{The proof of Theorem \ref{t1.1}}
In this section, we will complete the proof of Theorem \ref{t1.1} by following the similar line as in \cite{WND1}. According to Proposition \ref{p3.1}, there exists a $\tilde{C}>0$ such that
\[-\tilde{C}\le h(t)-c_0t\le\tilde{C} ~ ~ {\rm for ~ }t\ge0.\]
Define
\bess
k(t)=c_0t-2\tilde{C}, ~ l(t)=h(t)-k(t), ~ w(t,x)=u(t,x+k(t)), ~ z(t,x)=v(t,x+k(t)).
\eess
Clearly, $\tilde{C}\le l(t)\le 3\tilde{C}$ for $t\ge0$, and $(w,z)$ satisfies
\bess\begin{cases}
w_t=d_1w_{xx}+c_0w_x-aw+H(z), &t>0,~x\in(-k(t),l(t)),\\
z_t=d_2z_{xx}+c_0z_x-bz+G(w), &t>0,~x\in(-k(t),l(t)),\\
w(t,l(t))=z(t,l(t))=0, &t>0,\\
l'(t)=-\mu_1w_x(t,l(t))-\mu_2z_x(t,l(t))-c_0, &t>0.
 \end{cases}
 \eess
Choose a sequence $\{t_n\}$ with $t_n>0$, $t_n\to\yy$ and $\lim_{n\to\yy}l(t_n)=\liminf_{t\to\yy}l(t)\ge \tilde{C}>0$. Define
\bess
(k_n(t),l_n(t))=(k(t+t_n),l(t+t_n)) {\rm ~ ~ and ~ ~ }(w_n(t,x),z_n(t,x))=(w(t+t_n,x),z(t+t_n,x))
\eess
with $t>-t_n$ and $x\in(-k_n(t),l_n(t))$.

\begin{lem}\label{l4.1}There exists a subsequence of $\{t_n\}$, still denoted by itself, such that as $n\to\yy$,
\bess
l_n\to L ~ {\rm in ~ }C^{1+\frac{1+\alpha}{2}}_{{\rm loc}}(\mathbb{R}) {\rm ~ ~ and ~ ~ }(w_n,z_n)\to(W,Z) {\rm ~ in ~ }C^{1+\frac{\alpha}{2},2+\alpha}_{{\rm loc}}(\Omega),
\eess
where $\alpha\in(0,1)$ and $\Omega:=\{(t,x):t\in\mathbb{R}, ~ -\yy<x<L(t)\}$. Moreover, $(W,Z,L)$ satisfies
\bes\label{4.1}\begin{cases}
W_t=d_1W_{xx}+c_0W_x-aW+H(Z), &t\in\mathbb{R},~x\in(-\yy,L(t)),\\
Z_t=d_2Z_{xx}+c_0Z_x-bZ+G(W), &t\in\mathbb{R},~x\in(-\yy,L(t)),\\
W(t,L(t))=Z(t,L(t))=0, &t\in\mathbb{R},\\
L'(t)=-\mu_1W_x(t,L(t))-\mu_2Z_x(t,L(t))-c_0, ~ L(t)\ge L(0)\ge \tilde{C}, &t\in\mathbb{R}.
 \end{cases}
 \ees
\end{lem}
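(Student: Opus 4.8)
The plan is to obtain Lemma~\ref{l4.1} as a standard compactness (passing to the limit along a time sequence) argument, using the uniform estimates already available. First I would record the a~priori bounds: by Lemma~\ref{l2.0} there is a constant $C>0$, independent of $n$, such that $0<w_n,z_n\le C$ on their domains and $0<h'(t)<C$; since $k_n'(t)\equiv c_0$, this gives $-c_0<l_n'(t)<C-c_0$, so the functions $l_n$ are uniformly Lipschitz, and by Proposition~\ref{p3.1} we also have $\tilde C\le l(t)\le 3\tilde C$, hence $\tilde C\le l_n(t)\le 3\tilde C$ for all $n$ and all $t\ge -t_n$. In particular, for any fixed compact time interval, eventually (for $n$ large) the moving boundary $x=l_n(t)$ stays in a fixed compact band, and $-k_n(t)=-c_0(t+t_n)+2\tilde C\to-\infty$ uniformly on compact $t$-intervals, so the left boundary recedes to $-\infty$.

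Next I would straighten the free boundary by the usual change of variables $y = x - l_n(t)$ (or $y=x/l_n(t)$ shifted), turning the problem on the moving domain into a problem on a fixed half-line $y<0$ with a uniformly parabolic operator whose coefficients involve $l_n$ and $l_n'$, which are uniformly bounded. On this fixed domain I would apply interior $L^p$ parabolic estimates and then Schauder estimates to $(w_n,z_n)$, using the uniform $L^\infty$ bound on $(w_n,z_n)$, the smoothness of $H,G$ from condition~\textbf{(H)}, and the boundary regularity near the fixed (now stationary) front where $w_n=z_n=0$. This yields uniform $C^{1+\frac{\alpha}{2},\,2+\alpha}_{\rm loc}$ bounds for $(w_n,z_n)$ on compact subsets of $\Omega$, and, feeding the boundary derivative back into the Stefan condition $l_n'(t)=-\mu_1 w_{n,x}(t,l_n(t))-\mu_2 z_{n,x}(t,l_n(t))-c_0$, uniform $C^{\frac{1+\alpha}{2}}_{\rm loc}$ bounds for $l_n'$, hence $C^{1+\frac{1+\alpha}{2}}_{\rm loc}$ bounds for $l_n$. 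By Arzel\`a--Ascoli and a diagonal argument over an exhausting sequence of compact sets, I extract a subsequence with $l_n\to L$ in $C^{1+\frac{1+\alpha}{2}}_{\rm loc}(\mathbb R)$ and $(w_n,z_n)\to(W,Z)$ in $C^{1+\frac{\alpha}{2},\,2+\alpha}_{\rm loc}(\Omega)$; since $(w_n,z_n)$ each solve the PDE system on $(-k_n(t),l_n(t))$ and the left endpoint escapes to $-\infty$, the limit $(W,Z,L)$ solves \eqref{4.1} on $\{t\in\mathbb R,\ x<L(t)\}$, with the boundary condition $W(t,L(t))=Z(t,L(t))=0$ and the Stefan condition passing to the limit because convergence is in $C^1$ up to the (converging) boundary.

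Finally, for the monotonicity statement $L'(t)\ge 0$, i.e. $L(t)\ge L(0)\ge\tilde C$: note $l_n(0)=l(t_n)\to\liminf_{t\to\infty}l(t)=L(0)$ by our choice of $\{t_n\}$, and for every fixed $t\in\mathbb R$ we have $l_n(t)=l(t+t_n)\ge\liminf_{s\to\infty}l(s)=L(0)$ once $n$ is large enough that $t+t_n>0$; letting $n\to\infty$ gives $L(t)\ge L(0)$ for all $t$, and since $L(0)=\liminf l\ge\tilde C>0$, this also gives $L(t)\ge\tilde C$. I expect the main obstacle to be the boundary regularity near the free boundary: one must justify the uniform parabolic Schauder estimates up to $x=l_n(t)$ so that $w_{n,x}$ and $z_{n,x}$ converge at the boundary and the Stefan condition survives the limit — this is where the change of variables and the uniform Lipschitz control of $l_n$ (from Lemma~\ref{l2.0}) are essential, and it is the step that requires the most care, though it is by now standard in the free-boundary literature (cf.\ \cite{DL,WND1}).
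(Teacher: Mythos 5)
Your proposal matches the paper's proof in all essentials: uniform bounds on $(w_n,z_n)$ and $l_n,l_n'$ from Lemma~\ref{l2.0} and Proposition~\ref{p3.1}, straightening of the free boundary (the paper uses $\xi=x/l_n(t)$, which you mention as an alternative to the shift $y=x-l_n(t)$; both work since $l_n$ is bounded above and below), interior $L^p$ estimates followed by Sobolev embedding and Schauder estimates to get $C^{1+\frac{\alpha}{2},2+\alpha}_{\rm loc}$ compactness for $(w_n,z_n)$ and $C^{1+\frac{1+\alpha}{2}}_{\rm loc}$ compactness for $l_n$, passage to the limit in the PDE and Stefan condition, and the observation that $L(0)=\liminf_{t\to\infty}l(t)\le\lim_n l(t+t_n)=L(t)$. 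One minor imprecision: $\liminf_{s\to\infty}l(s)=L(0)$ does not literally give $l(t+t_n)\ge L(0)$ for $n$ large (only $\ge L(0)-\varepsilon$), but taking limits this still yields $L(t)\ge L(0)$, so the conclusion is unaffected.
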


\begin{proof}
By virtue of Lemma \ref{l2.0}, we see that $0<h'(t)\le C$ for $t>0$, which implies that
\bes\label{4.2}-c_0<l'_n(t)\le C-c_0 ~ ~ {\rm for ~ }t>-t_n.\ees
Denote
\[\xi=\frac{x}{l_n(t)} ~ ~ {\rm and ~ ~ }(\tilde{w}_n(t,\xi),\tilde{z}_n(t,\xi))=(w_n(t,\xi l_n(t)),z_n(t,\xi l_n(t))),\]
where $t>-t_n$ and $-k_n(t)/l_n(t)<\xi<1$. Clearly, $(\tilde{w}_n,\tilde{z}_n, l_n)$ satisfies
\bes\label{4.3}\begin{cases}
\tilde{w}_{nt}=\frac{d_1}{l^2_n(t)}\tilde w_{n\xi\xi}+\frac{\xi l'_n(t)+c_0}{l_n(t)}\tilde w_{n\xi}-a\tilde w_n+H(\tilde z_n), &t>-t_n,~\xi\in(-\frac{k_n(t)}{l_n(t)},1),\\
\tilde{z}_{nt}=\frac{d_2}{l^2_n(t)}\tilde{z}_{n\xi\xi}+\frac{\xi l'_n(t)+c_0}{l_n(t)}\tilde z_{n\xi}-b\tilde z_n+G(\tilde w_n), &t>-t_n,~\xi\in(-\frac{k_n(t)}{l_n(t)},1),\\
\tilde{w}_n(t,1)=\tilde{z}_n(t,1)=0, &t>-t_n,\\
l'_n(t)=\frac{-\mu_1}{l_n(t)}\tilde{w}_{n\xi}(t,1)-\frac{\mu_2}{l_n(t)}\tilde{z}_{n\xi}(t,1)-c_0, &t>-t_n.
 \end{cases}
 \ees
In light of Lemma \ref{l2.0} and \eqref{4.2}, for any $R,T\in\mathbb{R}$, we can apply the local $L^p$ estimates (see e.g. Theorem 1.10 in \cite{Wpara}, pp 10) to the equations of $\tilde{w}_n$ and $\tilde{z}_n$ in \eqref{4.3} over the domain $[-R-2,1]\times[T-3,T+1]$, respectively, and derive that for any $p>1$, there holds
\[\|\tilde{w}_n,\tilde{z}_n\|_{W^{1,2}_p([-R-1,1]\times[T-2,T+1])}\le C_1 ~ ~{\rm for ~ all ~ large ~ }n,\]
where $C_1$ depends only on the parameters of \eqref{1.2}, the initial functions $u_0$ and $v_0$, $R$ and $p$, but not on $n$ and $T$. Moreover, for any $\alpha'\in(0,1)$, we can take $p>1$ large enough and use the Sobolev embedding theorem to get
\bes
\|\tilde{w}_n,\tilde{z}_n\|_{C^{\frac{1+\alpha'}{2},1+\alpha'}([-R-1,1]\times[T-2,T+1])}\le C_2 ~ ~{\rm for ~ all ~ large ~ }n,
\ees
where $C_2$ depends only on the parameters of \eqref{1.2}, the initial functions $u_0$ and $v_0$, $R$ and $p$, but not on $n$ and $T$. This obviously implies that
\[\|l_n\|_{C^{1+\frac{\alpha'}{2}}([T-2,T+1])}\le C_3 ~ ~{\rm for ~ all ~ large ~ }n,\]
where $C_3$ has the same dependence as $C_1$ or $C_2$. Thus we can apply the local Schauder estimates (see e.g. Theorem 1.12 in \cite{Wpara}, pp 13) to the equations of $\tilde{w}_n$ and $\tilde{z}_n$ in \eqref{4.3} over the domain $[-R-1,1]\times[T-2,T+1]$, respectively, and derive that
\[\|\tilde{w}_n,\tilde{z}_n\|_{C^{1+\frac{\alpha'}{2},2+\alpha'}([-R,1]\times[T-1,T+1])}\le C_4 ~ ~{\rm for ~ all ~ large ~ }n,\]
where $C_4$ also relies only on the parameters of \eqref{1.2}, the initial functions $u_0$ and $v_0$, $R$ and $p$, but not on $n$ and $T$, which indicates
\[\|l_n\|_{C^{1+\frac{1+\alpha'}{2}}([T,\yy))}\le C_5  ~ ~{\rm for ~ all ~ large ~ }n,\]
where $C_5$ has the same dependence as $C_1$ or $C_2$. Then for any $\alpha\in(0,\alpha')$, owing to a compact consideration and the arbitrariness of $(R,T)$, we have that by passing to a subsequence, still denoted by itself, we have that as $n\to\yy$,
\[l_n\to L ~ {\rm in ~ }C^{1+\frac{1+\alpha}{2}}_{\rm loc}(\mathbb{R}) ~ ~ {\rm and ~ ~ }(\tilde{w}_n,\tilde{z}_n)\to(\tilde W,\tilde Z) ~ {\rm in ~ }C^{1+\frac{\alpha}{2},2+\alpha}_{\rm loc}((-\yy,1]\times\mathbb{R}),\]
where $(\tilde{W},\tilde{Z},L)$ satisfies
\bess\begin{cases}
\tilde{W}_{t}=\frac{d_1}{L^2(t)}\tilde W_{\xi\xi}+\frac{\xi L'(t)+c_0}{L(t)}\tilde W_{\xi}-a\tilde W+H(\tilde Z), &t\in\mathbb{R},~\xi\in(-\yy,1),\\
\tilde{Z}_{t}=\frac{d_2}{L^2(t)}\tilde{Z}_{\xi\xi}+\frac{\xi L'(t)+c_0}{L(t)}\tilde Z_{\xi}-b\tilde Z+G(\tilde W), &t\in\mathbb{R},~\xi\in(-\yy,1),\\
\tilde{W}(t,1)=\tilde{Z}(t,1)=0, &t\in\mathbb{R},\\
L'(t)=\frac{-\mu_1}{L(t)}\tilde{W}_{\xi}(t,1)-\frac{\mu_2}{L(t)}\tilde{Z}_{\xi}(t,1)-c_0, &t\in\mathbb{R}.
 \end{cases}
 \eess
Let $(W(t,x),Z(t,x))=(\tilde{W}(t,\frac{x}{L(t)}),\tilde{Z}(t,\frac{x}{L(t)}))$. Then it is easy to see that $(W,Z,L)$ satisfies \eqref{4.1}. Besides, $(w_n,z_n)\to(W,Z)$ in $C^{1+\frac{\alpha}{2},2+\alpha}_{{\rm loc}}(\Omega)$.
Notice that $L(0)=\lim_{n\to\yy}l(t_n)=\liminf_{t\to\yy}l(t)\le \lim_{n\to\yy}l(t+t_n)=L(t)$. So $L(t)\ge L(0)$ for all $t\in\mathbb{R}$. Thus the proof is complete.
\end{proof}

We next study the properties of $(W,Z,L)$. By virtue of strong maximum principle, we know that $(W(t,x),Z(t,x))>(0,0)$ for $x<L(t)$ and $t\in\mathbb{R}$.

\begin{lem}\label{l4.2} $L(t)\equiv L(0)$ and $(W(t,x),Z(t,x))=(\varphi_{c_0}(L(0)-x),\psi_{c_0}(L(0)-x))$,
where $(\varphi_{c_0},\psi_{c_0})$ is the unique monotone solution of \eqref{1.4}.
\end{lem}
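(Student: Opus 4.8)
The plan is to show that the entire-in-time solution $(W,Z,L)$ of the limiting problem \eqref{4.1} obtained in Lemma \ref{l4.1} must be a (spatial translate of the) semi-wave, and in particular that $L$ is constant. The guiding principle is a Liouville-type rigidity argument: the semi-wave $(\varphi_{c_0},\psi_{c_0})$ with the critical speed $c_0$ is, up to translation, the unique bounded entire solution of \eqref{4.1}, because $c_0$ is precisely the speed for which the free-boundary Stefan condition $\mu_1\varphi'_{c_0}(0)+\mu_2\psi'_{c_0}(0)=c_0$ holds. First I would record the basic structural facts about $(W,Z,L)$: by Lemma \ref{l4.1} we have $L(t)\ge L(0)\ge\tilde C>0$ for all $t\in\mathbb R$, $L$ is $C^{1}$, and by the strong maximum principle $(W,Z)>(0,0)$ on $\{x<L(t)\}$; moreover the $a$ priori bounds of Lemma \ref{l2.0} pass to the limit, so $0<W,Z\le C$ and $0\le L'(t)+c_0\le C$, i.e. $L$ is nondecreasing.

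Next I would prove that $L$ is constant by a sliding/comparison argument exploiting the monotonicity of $L$ together with the fact that the semi-wave gives a stationary solution in the moving frame. The key comparison functions are $\overline W(t,x)=(1+\eta)\varphi_{c_0}(\overline L(t)-x)$, $\overline Z(t,x)=(1+\eta)\psi_{c_0}(\overline L(t)-x)$ with $\overline L$ chosen as in the upper-solution construction of Lemma \ref{l3.1} (replacing $h$ by $L$ and $c_0t$ by the appropriate linear profile), which lies above $(W,Z,L)$ for $t$ large on the whole existence range and then, letting the starting time tend to $-\infty$ and $\eta\downarrow 0$, squeezes $(W,Z)$ between $\varphi_{c_0}(\cdot-x),\psi_{c_0}(\cdot-x)$ translated by the (finite) limits $\lim_{t\to\pm\infty}(L(t)-c_0t+\text{const})$. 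Since $L$ is monotone and the semi-wave comparison forces $L(t)-L(0)\to 0$ as $t\to-\infty$ while $L(t)\ge L(0)$, one concludes $L\equiv L(0)$; alternatively, differentiating \eqref{4.1} in $t$ and using that $L'\ge 0$ together with a strong-maximum-principle argument on $(W_t,Z_t,L')$ shows $L'\equiv 0$. I would present whichever of these two routes is cleanest, most likely the comparison route since the needed sub/supersolutions are already built in Section 3.

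Once $L\equiv L(0)=:\ell$, the system \eqref{4.1} becomes, after the substitution $y=\ell-x$, an autonomous parabolic system on the half-line $y>0$ with $W(t,\ell)=Z(t,\ell)=0$, i.e. $(W,Z)$ is a bounded entire-in-time solution of
\bess
W_t=d_1W_{yy}-c_0W_y-aW+H(Z),\quad Z_t=d_2Z_{yy}-c_0Z_y-bZ+G(W),\quad y>0,\ W(t,0)=Z(t,0)=0,
\eess
with the additional constraint $\mu_1W_y(t,0)+\mu_2 Z_y(t,0)=c_0$ coming from $L'\equiv0$. Then I would invoke the uniqueness part of \cite[Theorem 1.1]{LLW} — more precisely a zero-number / sliding argument: compare $(W,Z)$ at any fixed time with the stationary semi-wave $(\varphi_{c_0},\psi_{c_0})$, use the strong maximum principle and the Hopf lemma at $y=0$ together with the common Stefan-type normalization $\mu_1\varphi'_{c_0}(0)+\mu_2\psi'_{c_0}(0)=c_0=\mu_1W_y(t,0)+\mu_2Z_y(t,0)$ to rule out strict ordering, concluding $(W,Z)(t,\cdot)\equiv(\varphi_{c_0},\psi_{c_0})$ for every $t$. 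This yields $(W(t,x),Z(t,x))=(\varphi_{c_0}(\ell-x),\psi_{c_0}(\ell-x))$ as claimed.

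The main obstacle I anticipate is the rigidity step: upgrading ``$L$ monotone and bounded below by $L(0)$'' to ``$L$ constant'', and simultaneously ruling out that $(W,Z,L)$ is some genuinely time-dependent entire solution trapped between translates of the semi-wave. The delicate point is that the comparison with $(1+\eta)\varphi_{c_0}$ only pins down the asymptotics of $L$ as $t\to-\infty$, so one must carefully combine the monotonicity of $L$ (from $L'\ge 0$) with the two-sided semi-wave trapping, and then apply a strong maximum principle to the $t$-derivative $(W_t,Z_t)$ — which satisfies a linear cooperative system near the front because $H',G'>0$ — using the boundary relation for $L'$ to propagate $L'\equiv 0$ from a single time to all times. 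Handling the half-line (rather than a bounded interval) in these maximum-principle arguments, and making the Hopf-lemma comparison at the free boundary rigorous given only $C^{1}$ regularity of $L$, are where the care is needed; everything else follows the template of \cite{WND1}.
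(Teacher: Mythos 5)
Your plan is conceptually in the right neighborhood — trap $(W,Z)$ between shifted semi-waves, then use the Stefan condition together with the Hopf lemma and strong maximum principle to force identity — and the last step you describe (at the minimum of $L$, necessarily $L'(0)=0$, so $\mu_1 W_x(0,L(0))+\mu_2 Z_x(0,L(0))=-c_0=\mu_1\varphi'_{c_0}(0)+\mu_2\psi'_{c_0}(0)$, whence equality of the normal derivatives and then of the profiles) is essentially what the paper does in Step~3. But there are two genuine gaps in the middle.

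First, you assert that $L$ is nondecreasing, deducing it from ``$0\le L'(t)+c_0\le C$.'' This is a non sequitur: $L'(t)+c_0\ge 0$ only gives $L'(t)\ge -c_0$, not $L'\ge 0$; indeed, since $L'=\lim_n l_n'$ and $l_n'=h'(\cdot+t_n)-c_0$, $L'$ can perfectly well be negative. The paper never uses monotonicity of $L$. The property it does use is $L(t)\ge L(0)$ for all $t$, which comes solely from the special choice of $\{t_n\}$ realizing $\liminf_t l(t)$, not from any sign of $L'$. Consequently your second proposed route (applying a maximum principle to $(W_t,Z_t)$ and propagating $L'\equiv 0$ from the sign $L'\ge 0$) does not get off the ground.

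Second, the core rigidity step — upgrading the one-sided trapping $(W,Z)\ge(\varphi_{c_0}(C_1-x),\psi_{c_0}(C_1-x))$ to the sharp front location — is where the real work is, and your proposal leaves it at the level of ``squeeze between upper and lower translates and pass to a limit.'' That squeeze alone does not close, because the upper solution from Lemma~\ref{l3.1} only controls $h$ (hence $L$) from above and does not by itself pin the spatial offset. What the paper actually does is a precise sliding argument: define $C^*:=\sup\{C:\ (W,Z)\ge(\varphi_{c_0}(C-\cdot),\psi_{c_0}(C-\cdot))\}$, assume $C^*<L(0)$, prove strict ordering (Claim 1) and a uniform positive gap $-\omega_i(X_0)>0$ on compact sets (Claim 2), and then run an auxiliary monotone-in-time initial–boundary value problem on a half-line whose limit stationary solution $(W^*,Z^*)$ must coincide with the $\ep$-shifted semi-wave (Claim 3). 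Claim 3 is where the structural sign condition $ab>H'(v^*)G'(u^*)$ (a consequence of {\bf (H)}) enters via an ODE maximum-principle contradiction; this fact is essential and does not appear in your outline. The contradiction with the maximality of $C^*$ then forces $C^*=L(0)$, after which the Hopf/Stefan step you describe finishes the proof. So the skeleton of your final step is right, but the middle of your argument needs to be replaced by the $C^*$-sliding construction (and the incorrect monotonicity of $L$ must be dropped).
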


\begin{proof}This proof will be divided into several steps.

{\bf Step 1.} In this step, we will show that there exists $C_1<L(0)$ such that
\[(W(t,x),Z(t,x))\ge(\varphi_{c_0}(C_1-x),\psi_{c_0}(C_1-x)) ~ ~ {\rm for ~ }t\in\mathbb{R}, ~ x\le C_1.\]
Taking advantage of Lemma \ref{l3.2} and \ref{l3.3}, we have the following statements.
\begin{enumerate}[(i)]
  \item If $\mathbb{B}[w]=w$, then for any $t+t_n>T$ and $c(t+t_n)-k(t+t_n)<x<\ud h(t+t_n)-k(t+t_n)$,
  \bess
  \begin{cases}
    w_n(t,x)\ge(1-\ep e^{-\sigma(t+t_n-T)})\varphi_{c_0}(\ud h(t+t_n)-k(t+t_n)-x),\\
    z_n(t,x)\ge(1-\ep e^{-\sigma(t+t_n-T)})\psi_{c_0}(\ud h(t+t_n)-k(t+t_n)-x),
  \end{cases}
  \eess
  where $T$ and $\ud h$ are determined in Lemma \ref{l3.2}.
  \item If $\mathbb{B}[w]=w_x$, then for any $t+t_n>T$ and $-k(t+t_n)<x<\ud h(t+t_n)-k(t+t_n)$,
  \bess
  \begin{cases}
    w_n(t,x)\ge(1-\ep e^{-\delta(\tau_n-T)})\left[\varphi_{c_0}(\ud h(\tau_n)-k(\tau_n)-x)+\varphi_{c_0}(\ud h(\tau_n)+k(\tau_n)+x)-\varphi_{c_0}(2\ud h(\tau_n))\right],\\
    z_n(t,x)\ge(1-\ep e^{-\delta(\tau_n-T)})\left[\psi_{c_0}(\ud h(\tau_n)-k(\tau_n)-x)+\psi_{c_0}(\ud h(\tau_n)+k(\tau_n)+x)-\psi_{c_0}(2\ud h(\tau_n))\right],
  \end{cases}
  \eess
  where $\tau_n=t+t_n$, $T$ and $\ud h$ are determined in Lemma \ref{l3.3}.
\end{enumerate}
It is easy to see that there exists a $C_1\in\mathbb{R}$ such that $C_1<L(0)$ and $h(t+t_n)-k(t+t_n)\ge C_1$ for $t+t_n>T$. Hence, for any $x\le C_1$ and $t\in\mathbb{R}$,
letting $n\to\yy$, we have
\[(W(t,x),Z(t,x))\ge(\varphi_{c_0}(C_1-x),\psi_{c_0}(C_1-x)) ~ ~ {\rm for ~ }t\in\mathbb{R},~ x\le C_1,\]
which completes this step.

{\bf Step 2.}
Due to Step 1, we can define
\[C^*:=\sup\{C: (W(t,x),Z(t,x))\ge (\varphi_{c_0}(C-x),\psi_{c_0}(C-x)) ~ {\rm for ~ }t\in\mathbb{R}, ~ x\le C \}.\]
Obviously, $C^*\le L(0)$ and $(W(t,x),Z(t,x))\ge (\varphi_{c_0}(C^*-x),\psi_{c_0}(C^*-x))$  for $x\le C^*$ and $x\in\mathbb{R}$. In this step, we will show $C^*=L(0)$. Assume on the contrary that $C^*<L(0)$.

{\bf Claim 1.} $(W(t,x),Z(t,x))>(\varphi_{c_0}(C^*-x),\psi_{c_0}(C^*-x))$  for $x\le C^*$ and $x\in\mathbb{R}$.

Otherwise, since $C^*<L(0)=\min_{t\in\mathbb{R}}\{L(t)\}$ and $(W(t,C^*),Z(t,C^*))>(0,0)$, there exists $(t_0,x_0)\in(-\yy,C^*)\times\mathbb{R}$ such that $W(t_0,x_0)=\varphi_{c_0}(C^*-x_0)$ or $Z(t_0,x_0)=\psi_{c_0}(C^*-x_0)$. Without loss of generality, we suppose that $W(t_0,x_0)=\varphi_{c_0}(C^*-x_0)$. Clearly, $(\varphi_{c_0}(C^*-x),\psi_{c_0}(C^*-x))$ satisfies the first two equations of \eqref{4.1} for $(-\yy,C^*)\times\mathbb{R}$.  Thus we see that for $x<C^*$ and $t\in\mathbb{R}$,
\[(W(t,x)-\varphi_{c_0}(C^*-x))_t\ge d_1(W(t,x)-\varphi_{c_0}(C^*-x))_{xx}+c_0(W(t,x)-\varphi_{c_0}(C^*-x))_x-a(W(t,x)-\varphi_{c_0}(C^*-x)).\]
In view of strong maximum principle, we derive that $W(t,x)-\varphi_{c_0}(C^*-x)\equiv0$ for $x<C^*$ and $t\in\mathbb{R}$. However,  since $W(t,C^*)-\varphi_{c_0}(C^*-C^*)=W(t,C^*)>0$ by $C^*<L(0)$, we obtain a contradiction. Thus our claim holds.

{\bf Claim 2.} For any $x\le C^*$,
\bess
\begin{cases}
  \omega_1(x):=\sup_{t\in\mathbb{R},y\in[x,C^*]}(\varphi_{c_0}(C^*-y)-W(t,y))<0,\\
  \omega_2(x):=\sup_{t\in\mathbb{R},y\in[x,C^*]}(\psi_{c_0}(C^*-y)-Z(t,y))<0.
\end{cases}
\eess
In light of Claim 1, we know $w_i(x)\le0$ for $x\le C^*$ and $i=1,2$. Arguing indirectly, we have $w_1(x_0)=0$ or $w_2(x_0)=0$ for some $x_0<C^*$. Without loss of generality, we may assume that $w_1(x_0)=0$. From Claim 1, we know the supremum $w_1(x_0)$ cannot be achieved at any $(t,y)\in[x_0,C^*]\times\mathbb{R}$. Thus there exists a sequence $\{(s_n,y_n)\}\subset[x_0,C^*]\times\mathbb{R}$ with $|s_n|\to\yy$ such that
\bes\label{4.5}\varphi_{c_0}(C^*-y_n)-W(s_n,y_n)\to0 ~ ~ {\rm as ~ }n\to\yy.\ees
By passing to a subsequence, we can assume that $y_n\to y_0\in[x_0,C^*]$. Define
\[(W_n(t,x),Z_n(t,x),L_n(t))=(W(t+s_n,x+y_n),Z(t+s_n,x+y_n),L(t+s_n)-y_n)\]
with $t\in\mathbb{R}$ and $x\le L_n(t)$. Due to $l\to L$ in $C^{1+\frac{1+\alpha}{2}}_{{\rm loc}}(\mathbb{R})$ and \eqref{4.2}, we know that $L'_n$ is uniformly bounded for $n$ and $t\in\mathbb{R}$. Moreover, $0<L(0)-C^*\le L_n(t)\le 3C-x_0$ for $t\in\mathbb{R}$. So we can argue as in the proof of Lemma \ref{l4.1} to deduce that by passing to a subsequence if necessary, for $\alpha\in(0,1)$ there holds
\[(W_n,Z_n,L_n)\to(\tilde{W},\tilde{Z},\tilde{L}){\rm ~ and ~ }[C^{1+\frac{\alpha}{2},2+\alpha}_{{\rm loc}}(\tilde{\Omega})]^2\times C^{1+\frac{1+\alpha}{2}}_{{\rm loc}}(\mathbb{R}){\rm ~ as ~ }n\to\yy,\]
where $\tilde{\Omega}:=\{(t,x):t\in\mathbb{R}, ~ x<\tilde{L}(t)\}$, and $(\tilde{W},\tilde{Z},\tilde{L})$ satisfies
\bes\label{4.6}\begin{cases}
\tilde W_t=d_1\tilde W_{xx}+c_0\tilde W_x-a\tilde W+H(\tilde Z), &t\in\mathbb{R},~x\in(-\yy,\tilde L(t)),\\
\tilde Z_t=d_2\tilde Z_{xx}+c_0\tilde Z_x-b\tilde Z+G(\tilde W), &t\in\mathbb{R},~x\in(-\yy,\tilde L(t)),\\
\tilde W(t,\tilde L(t))=\tilde Z(t,\tilde L(t))=0, &t\in\mathbb{R}.
 \end{cases}
 \ees
 Furthermore, $\tilde{L}(t)>C^*-y_0$ for $t\in\mathbb{R}$, $(\tilde{W}(t,x),\tilde{Z}(t,x))>(0,0)$ for $(t,x)\in\mathbb{R}\times(-\yy,\tilde{L}(t))$, and
 \[(\tilde{W}(t,x),\tilde{Z}(t,x))\ge(\varphi_{c_0}(C^*-y_0-x),\psi_{c_0}(C^*-y_0-x)) ~ ~ {\rm for ~ }t\in\mathbb{R}, ~ x<C^*-y_0.\]
 In view of \eqref{4.5}, we see $\tilde{W}(0,0)=\varphi_{c_0}(C^*-y_0)$.

 On the other hand, it is easy to see that $(\varphi_{c_0}(C^*-y_0-x),\psi_{c_0}(C^*-y_0-x))$ satisfies \eqref{4.6} with $C^*-y_0$ in place of $\tilde{L}(t)$. Then by the strong maximum principle, we can derive that $\tilde{W}(t,x)\equiv\varphi_{c_0}(C^*-y_0-x)$ for $t\le0$ and $x<C^*-y_0$, which implies that $\tilde{W}(0,C^*-y_0)=0$. However, since $\tilde{L}(t)>C^*-y_0$, we have $\tilde{W}(t,C^*-y_0)>0$. This contradiction completes the proof of this claim.

 Since $(\varphi_{c_0}(C^*-x),\psi_{c_0}(C^*-x))\to(u^*,v^*)$ as $x\to-\yy$, for any small $\ep_0>0$ there exists $X_0<C^*$ such that
 \[(\varphi_{c_0}(C^*-x),\psi_{c_0}(C^*-x))\ge(u^*-\ep_0,v^*-\ep_0) ~ ~ {\rm for ~ }x\le X_0.\]
 Let $\ep\in(0,\ep_0)$ such that
 \[(\varphi_{c_0}(C^*-X_0),\psi_{c_0}(C^*-X_0))-(\omega_1(X_0),\omega_2(X_0))\ge(\varphi_{c_0}(C^*-X_0+\ep),\psi_{c_0}(C^*-X_0+\ep)).\]
 Then we consider the following auxiliary problem
 \bes\label{4.7}
 \begin{cases}
   \bar{W}_t=d_1\bar{W}_{xx}+c_0\bar{W}_x-a\bar{W}+H(\bar{Z}), &t>0, ~ x<X_0, \\
   \bar{Z}_t=d_2\bar{Z}_{xx}+c_0\bar{Z}_x-b\bar{Z}+G(\bar{W}), &t>0, ~ x<X_0, \\
   \bar{W}(t,X_0)=\varphi_{c_0}(C^*-X_0+\ep), ~ \bar{Z}(t,X_0)=\psi_{c_0}(C^*-X_0+\ep), & t>0, \\
   \bar{W}(0,x)=\varphi_{c_0}(C^*-x), ~ \bar{Z}(0,x)=\psi_{c_0}(C^*-x), &x<X_0.
 \end{cases}
\ees
Clearly, $(u^*,v^*)$ and $(\varphi_{c_0}(C^*-x),\psi_{c_0}(C^*-x))$ are a pair of upper and lower solutions of \eqref{4.7}. By a comparison argument, we see that $(\bar{W}(t,x),\bar{Z}(t,x))$ is nondecreasing in $t>0$, and
\bes\label{4.8}
(\varphi_{c_0}(C^*-x),\psi_{c_0}(C^*-x))\le(\bar{W}(t,x),\bar{Z}(t,x))\le(u^*,v^*) ~ ~ {\rm for ~ }t>0,~ x<X_0.
\ees
Moreover, we have
\[(\bar{W}(t,x),\bar{Z}(t,x))\to(W^*(x),Z^*(x)) ~ ~ {\rm in ~ }[C^{2}_{{\rm loc}}((-\yy,X_0)),\]
where $(W^*,Z^*)$ satisfies
 \bes\label{4.9}
\left\{\!\begin{aligned}
&d_1{W^*}''+c_0{W^*}'-aW^*+H(Z^*)=0, &x<X_0,\\
&d_2{Z^*}''+c_0{Z^*}'-bZ^*+G(W^*)=0, &x<X_0,\\
&W^*(-\yy)=u^*, ~ Z^*(-\yy)=v^*,\\
&W^*(X_0)=\varphi_{c_0}(C^*-X_0+\ep), ~ Z^*(X_0)=\psi_{c_0}(C^*-X_0+\ep).
\end{aligned}\right.
 \ees
It is easy to verify that $(\varphi_{c_0}(C^*-x+\ep),\psi_{c_0}(C^*-x+\ep))$ satisfies \eqref{4.9}. Together with $(\varphi_{c_0}(C^*-x+\ep),\psi_{c_0}(C^*-x+\ep))\ge(\varphi_{c_0}(C^*-x),\psi_{c_0}(C^*-x))$, by a comparison consideration, we have
\[(\varphi_{c_0}(C^*-x+\ep),\psi_{c_0}(C^*-x+\ep))\ge(\bar{W}(t,x),\bar{Z}(t,x)) ~ ~ {\rm for ~ }t>0, ~ x<X_0.\]
Letting $t\to\yy$ yields
\bes\label{4.11}(\varphi_{c_0}(C^*-x+\ep),\psi_{c_0}(C^*-x+\ep))\ge(W^*(x),Z^*(x)) ~ ~ {\rm for ~ }x\le X_0.\ees

{\bf Claim 3.} $(\varphi_{c_0}(C^*-x+\ep),\psi_{c_0}(C^*-x+\ep))=(W^*(x),Z^*(x))$ for $x\le X_0$.

Otherwise, according to $(\varphi_{c_0}(C^*-X_0+\ep),\psi_{c_0}(C^*-X_0+\ep))=(W^*(X_0),Z^*(X_0))$, $(\varphi_{c_0}(-\yy),\psi_{c_0}(-\yy))=(W^*(-\yy),Z^*(-\yy))=(u^*,v^*)$ and \eqref{4.11}, there holds
\bess &&W^*(x_1)-\varphi_{c_0}(C^*-x_1+\ep)=\min_{x\in(-\yy,X_0]}\{W^*(x)-\varphi_{c_0}(C^*-x+\ep)\}<0 ~ {\rm for ~ some ~ }x_1<X_0, ~ {\rm or}\\
&&Z^*(x_2)-\psi_{c_0}(C^*-x_2+\ep)=\min_{x\in(-\yy,X_0]}\{Z^*(x)-\psi_{c_0}(C^*-x+\ep)\}<0 ~ {\rm for ~ some ~ }x_2<X_0.
\eess
Without loss of generality, we assume $W^*(x_1)-\varphi_{c_0}(C^*-x_1+\ep)=\min_{x\in(-\yy,X_0]}\{W^*(x)-\varphi_{c_0}(C^*-x+\ep)\}<0$. For convenience, we denote $(\hat{W},\hat{Z})=(W^*(x)-\varphi_{c_0}(C^*-x+\ep),Z^*(x)-\psi_{c_0}(C^*-x+\ep))$. Simple calculations show that $(\hat{W},\hat{Z})$ satisfies
\bes
\begin{cases}
  d_1\hat{W}''+c_0\hat{W}'-a\hat{W}+H(Z^*)-H(\psi_{c_0}(C^*-x+\ep))=0, & x<X_0, \\
  d_2\hat{Z}''+c_0\hat{Z}'-b\hat{Z}+G(W^*)-G(\varphi_{c_0}(C^*-x+\ep))=0, & x<X_0,
\end{cases}
\ees
which further implies that
\bess
\begin{cases}
  a\hat{W}(x_1)-H(Z^*(x_1))+H(\psi_{c_0}(C^*-x_1+\ep))\ge0 ~ ~ {\rm and ~ ~ }\hat{Z}(x_1)<0,\\
  b\hat{Z}(x_2)-G(W^*(x_2))+G(\varphi_{c_0}(C^*-x_2+\ep))\ge0 ~ ~ {\rm and ~ ~ }\hat{W}(x_2)<0.
\end{cases}
\eess
Due to \eqref{4.8}, we have
\bes\label{4.10}
(W^*(x),Z^*(x))\ge(\varphi_{c_0}(C^*-x),\psi_{c_0}(C^*-x))\ge(u^*-\ep_0,v^*-\ep_0) ~ ~ {\rm for ~ }x\le X_0.
\ees
By the mean value theorem, there exist $\eta$ and $\zeta$ satisfying
\bess
\begin{cases}
  v^*-\ep_0\le Z^*(x_1)<\eta<\psi_{c_0}(C^*-x_1+\ep)<v^*,\\
  u^*-\ep_0\le W^*(x_2)<\zeta<\varphi_{c_0}(C^*-x_2+\ep)<u^*,
\end{cases}
\eess
such that
\bess
a\hat{W}(x_1)-H'(\eta)\hat{Z}(x_1)\ge0 ~ ~ {\rm and ~ ~ }b\hat{Z}(x_2)-G'(\zeta)\hat{W}(x_2)\ge0,
\eess
which, combined with $\hat{W}(x_1)\le\hat{W}(x_2)<0$ and $\hat{Z}(x_2)\le\hat{Z}(x_1)<0$, yields that $ab\le H'(\eta)G'(\zeta)$. However, since $ab>H'(v^*)G'(u^*)$, we can choose $\ep_0$ sufficiently small such that $ab>H'(v^*-\ep_0)G'(u^*-\ep_0)>H'(\eta)G'(\zeta)$. Then we derive a contradiction. Thus Claim 3 is true.

Noticing that for any $t\in\mathbb{R}$ and $x\le C^*$, we have
\bess
&&(W(t,x),Z(t,x))\ge(\varphi_{c_0}(C^*-x),\psi_{c_0}(C^*-x)),\\
&&W(t,X_0)\ge\varphi_{c_0}(C^*-X_0)-\omega_1(X_0)\ge\varphi_{c_0}(C^*-X_0+\ep),\\
&&Z(t,X_0)\ge\psi_{c_0}(C^*-X_0)-\omega_2(X_0)\ge\psi_{c_0}(C^*-X_0+\ep).
\eess
It then follows from a comparison method that
\[(W(t+s,x),Z(t+s,x))\ge(\bar{W}(t,x),\bar{Z}(t,x)) ~ ~ {\rm for ~ all}~ t>0, ~ x<X_0, ~ s\in\mathbb{R},\]
which clearly indicates
\[(W(t,x),Z(t,x))\ge(\bar{W}(t-s,x),\bar{Z}(t-s,x)) ~ ~ {\rm for ~ all}~ t>s, ~ x<X_0, ~ s\in\mathbb{R}.\]
Letting $s\to-\yy$ and using Claim 3, we have
\bes\label{4.13}
(W(t,x),Z(t,x))\ge(W^*(x),Z^*(x))=(\varphi_{c_0}(C^*-x+\ep),\psi_{c_0}(C^*-x+\ep)) ~ \forall ~ t\in\mathbb{R}, ~ x<X_0.
\ees

On the other hand, we denote $\rho=\min\{-\omega_1(X_0),-\omega_2(X_0)\}$. Owing to Claim 2, $\rho>0$. By continuity,we can find $\ep_1\in(0,\ep)$ small enough such that
\[(\varphi_{c_0}(C^*-x+\ep_1),\psi_{c_0}(C^*-x+\ep_1))\le (\varphi_{c_0}(C^*-x),\psi_{c_0}(C^*-x))+(\rho,\rho) ~ {\rm for ~ }x\in[X_0,C^*+\ep_1].\]
Therefore, for $t\in\mathbb{R}$ and $x\in[X_0,C^*+\ep_1]$, we see
\[(W(t,x),Z(t,x))-(\varphi_{c_0}(C^*-x+\ep_1),\psi_{c_0}(C^*-x+\ep_1))\ge-(\rho,\rho)-(\omega_1(X_0),\omega_2(X_0))\ge(0,0),\]
which, together with \eqref{4.13}, leads to
\[(W(t,x),Z(t,x))-(\varphi_{c_0}(C^*-x+\ep_1),\psi_{c_0}(C^*-x+\ep_1))\ge(0,0) ~ ~ {\rm for ~ }t\in\mathbb{R}, ~ x\le C^*+\ep_1.\]
This contradicts the definition of $C^*$. Thus $C^*=L(0)$. The Step 2 is finished.

{\bf Step 3.} This step will complete the proof of this lemma.

From the above analysis, we know
\bess
&C^*=L(0)=\dd\min_{t\in\mathbb{R}}L(t), ~ L'(0)=0,\\
&(W(t,x),Z(t,x))-(\varphi_{c_0}(C^*-x),\psi_{c_0}(C^*-x))\ge(0,0) ~ ~ {\rm for ~ }t\in\mathbb{R}, ~ x\le C^*,\\
&(W(0,L(0)),Z(0,L(0)))=(0,0), ~ W_x(0,L(0))\le -\varphi_{c_0}'(0), ~ Z_x(0,L(0))\le -\psi_{c_0}'(0).
\eess
Note that $\mu_1\varphi_{c_0}'(0)+\mu_2\psi_{c_0}'(0)=c_0$. We deduce $W_x(0,L(0))=-\varphi_{c_0}'(0)$ and $Z_x(0,L(0))=-\psi_{c_0}'(0)$. Then applying the strong maximum principle and the Hopf boundary lemma to \eqref{4.1} arrives at $(W(t,x),Z(t,x))=(\varphi_{c_0}(C^*-x),\psi_{c_0}(C^*-x))$ for $t\in\mathbb{R}$ and $x\le L(t)$, which obviously implies that $L(t)\equiv L(0)$ for $t\in\mathbb{R}$. The proof is ended.
\end{proof}

With the aid of the above results, we now finish the proof of Theorem \ref{t1.1}.

\begin{proof}[{\bf Proof of Theorem \ref{t1.1}:}]\, The proof is divided into three steps.

{\bf Step 1.} In this step, we will show that for any sequence $\{t_n\}$ defined in Lemma \ref{l4.1}, the following statements are valid.
\begin{enumerate}[(i)]
  \item For any $t\in\mathbb{R}$, $h'(t+t_n)\to c_0$ as $n\to\yy$.
  \item When $\mathbb{B}[w]=w$, then
  \[\lim_{n\to\yy}\max_{x\in[ct_n,h(t_n)]}\left[|u(t_n,x)-\varphi_{c_0}(h(t_n)-x)|+|v(t_n,x)-\psi_{c_0}(h(t_n)-x)|\right]=0, ~ ~ \forall c\in(0,c_0).\]
  \item When $\mathbb{B}[w]=w_x$, then
  \[\lim_{n\to\yy}\max_{x\in[0,h(t_n)]}\left[|u(t_n,x)-\varphi_{c_0}(h(t_n)-x)|+|v(t_n,x)-\psi_{c_0}(h(t_n)-x)|\right]=0.\]
\end{enumerate}

Due to Lemmas \ref{l4.1} and \ref{l4.2}, we know $h(t+t_n)-c_0(t+t_n)+2\tilde C\to L(0)$ in $C^{1+\frac{1+\alpha}{2}}_{{\rm loc}}(\mathbb{R})$ as $n\to\yy$, which implies that $h'(t+t_n)\to c_0$ in $C^{\frac{1+\alpha}{2}}_{{\rm loc}}(\mathbb{R})$ as $n\to\yy$. Thus the assertion (i) holds.

By Lemmas \ref{l4.1} and \ref{l4.2} again, we see that as $n\to\yy$,
\[(u(t+t_n,x+h(t+t_n)), v(t+t_n,x+h(t+t_n)))\to(\varphi_{c_0}(-x),\psi_{c_0}(-x)) ~ {\rm in ~ }[C^{1+\frac{\alpha}{2},2+\alpha}_{{\rm loc}}(\mathbb{R}\times(-\yy,0])]^2,\]
which indicates that for any $K_0>0$,
\bes\label{4.14}\lim_{n\to\yy}\|(u(t_n,x),v(t_n,x))-(\varphi_{c_0}(h(t_n)-x),\psi_{c_0}(h(t_n)-x))\|_{L^{\yy}([h(t_n)-K_0,h(t_n)])}=0.\ees
Thanks to Lemmas \ref{l3.2} and \ref{l3.3}, for any small $\ep>0$, there exist $K_1>0$ and large $N$ such that
\[(u^*-\ep,v^*-\ep)\le (u,v)(t_n,x)\le (u^*+\ep,v^*+\ep),\]
for $n\ge N$, $x\in[ct_n,h(t_n)-K_1]$ when $\mathbb{B}[w]=w$ and $x\in[0,h(t_n)-K_1]$ when $\mathbb{B}[w]=w_x$. Moreover, for large $K_2>0$, it is easy to see that
\[(u^*-\ep,v^*-\ep)\le (\varphi_{c_0},\psi_{c_0})(h(t_n)-x)\le (u^*+\ep,v^*+\ep) ~ ~ {\rm for ~ }x\le h(t_n)-K_2.\]
Thus we can choose $K_0=\max\{K_1,K_2\}$, then for $n\ge N$,
\[|(u(t_n,x),v(t_n,x))-(\varphi_{c_0}(h(t_n)-x),\psi_{c_0}(h(t_n)-x))|\le 2\ep,\]
uniformly in $[ct_n,h(t_n)-K_0]$ when $\mathbb{B}[w]=w$, and in $[0,h(t_n)-K_0]$ when $\mathbb{B}[w]=w_x$,
which, combined with \eqref{4.14}, leads to the assertions (ii) and (iii). Step 1 is ended.

{\bf Step 2.} In this step, we will show $h(t)-c_0t\to L(0)-2\tilde C$ as $t\to\yy$.

For the sequence $\{t_n\}$ satisfying $\lim_{n\to\yy}l_n(t)=\liminf_{t\to\yy}l(t)=C^*$, by Lemma \ref{l4.1} and Step 1, we know that there exists a subsequence, still denoted by itself, such that
\bes\label{4.15}
\lim_{n\to\yy}(h(t_n)-c_0t_n)=\liminf_{t\to\yy}(h(t)-c_0t)=L(0)-2\tilde C, ~ \lim_{n\to\yy}h'(t_n)=c_0,
\ees
and the assertions (ii) and (iii) hold.

Argue on the contrary that $h(t)-c_0t$ does not converges to $L(0)-2\tilde C$ as $t\to\yy$. Then there must exist a sequence $\{s_n\}$ with $s_n\to\yy$ such that
\[\lim_{n\to\yy}(h(s_n)-c_0s_n)=\limsup_{t\to\yy}(h(t)-c_0t)=:\bar{h}^*>L(0)-2\tilde C.\]

We now use the upper solution $(\bar{u},\bar{v},\bar{h})$ defined in Lemma \ref{l3.1} to derive a contradiction. Set $X_0=\sigma=(\bar{h}^*-L(0)+2\tilde C)/4>0$ and $T=t_n$. Then we can choose $\delta$ and $K$ suitably such that \eqref{3.3} is valid. Thus to prove \eqref{3.1}, it remains to verify that $(\bar{u}(t_n,x),\bar{v}(t_n,x))\ge (u(t_n,x), v(t_n,x))$ for $x\in[0,h(t_n)]$.

In view of the assertion (iii) in Step 1, we have that for $x\in[0,h(t_n)]$,
\bess
(\bar{u}(t_n,x),\bar{v}(t_n,x))&=&(1+K)(\varphi_{c_0}(\bar{h}(t_n)-x),\psi_{c_0}(\bar{h}(t_n)-x))\\
&=&(1+K)(\varphi_{c_0}(h(t_n)+X_0-x),\psi_{c_0}(h(t_n)+X_0-x))\\
&\ge& (u(t_n,x), v(t_n,x))
\eess
provided that $n$ is large enough. Using the assertion (ii), we similarly can derive that if $n$ is sufficiently large, then for $x\in[ct_n,h(t_n)]$,
\bess
(\bar{u}(t_n,x),\bar{v}(t_n,x))\ge(u(t_n,x), v(t_n,x))
\eess
As for $x\in[0,ct_n]$, recall that $c<c_0$ and
\bess
\limsup_{t\to\yy}(u(t,x),v(t,x))\le (u^*,v^*) ~ ~ {\rm uniformly ~ in ~ }[0,\yy).
\eess
Thus for $x\in[0,ct_n]$,
\bess
(\bar{u}(t_n,x),\bar{v}(t_n,x))&=&(1+K)(\varphi_{c_0}(\bar{h}(t_n)-x),\psi_{c_0}(\bar{h}(t_n)-x))\\
&=&(1+K)(\varphi_{c_0}(h(t_n)+X_0-x),\psi_{c_0}(h(t_n)+X_0-x))\\
&\ge&(1+K)(\varphi_{c_0}(h(t_n)+X_0-ct_n),\psi_{c_0}(h(t_n)+X_0-ct_n))\\
&\ge&(u(t_n,x), v(t_n,x))
\eess
provided that $n$ is large enough. So $(\bar{u}(t_n,x),\bar{v}(t_n,x))\ge (u(t_n,x), v(t_n,x))$ for $x\in[0,h(t_n)]$ and all large $n$. Then by Lemma \ref{l3.1}, we obtain that $h(t)\le \bar{h}(t)$ for $t\ge t_n$ and all large $n$, which implies that for large $s_k$ satisfying $s_k\ge t_n$, we have
\[h(s_k)\le \bar{h}(s_k)=c_0(s_k-t_n)+\sigma(1-e^{-\delta(s_k-t_n)})+h(t_n)+X_0.\]
Furthermore,
\[\bar{h}^*=\lim_{k\to\yy}(h(s_k)-c_0s_k)\le-c_0t_n+\sigma+h(t_n)+X_0.\]
Letting $n\to\yy$ leads to
\[\bar{h}^*\le L(0)-2\tilde C+\sigma+X_0=L(0)-2\tilde C+(\bar{h}^*-L(0)+2\tilde C)/2,\]
which is a contradiction. Thus Step 2 is finished.

With the aid of the two steps, we can use any positive sequence $\{t_n\}$ with $t_n\to\yy$ to argue as in Lemma \ref{l4.1}, and derive a subsequence such that the assertions (ii) and (iii) in Step 1 and \eqref{4.15} hold, which obviously completes the proof.
\end{proof}

\end{document}